\newtheorem{lemma}{Lemma}[section]
\newtheorem{theorem}{Theorem}[section]
\newtheorem{proposition}{Proposition}[section]
\theoremstyle{definition}
\newtheorem{definition}{Definition}[section]
\theoremstyle{remark}
\newtheorem{remark}{Remark}[section]
\numberwithin{equation}{section}
\newcommand{\p}{\partial}
\newcommand{\norm}[1]{\left\Vert#1\right\Vert}
\newcommand{\dd}{\mathrm{d}}
\newcommand{\sign}{\mathrm{sign}}
\newcommand{\di}{\mathrm{div}}
\newcommand{\grad}{\mathrm{grad}}
\newcommand{\lie}{\mathcal{L}}
\newcommand{\vol}{\mathrm{vol}}
\newcommand{\Rmnum}[1]{\mathrm{\expandafter\@slowromancap\romannumeral#1@}}
\begin{document}
\title[Local Uniqueness of Transonic Shocks]{Local Uniqueness of Steady Spherical
Transonic Shock-fronts for the Three--Dimensional Full Euler
Equations}
\author{Gui-Qiang G. Chen}
\author{Hairong Yuan}
\address{Gui-Qiang G. Chen: School of Mathematical Sciences, Fudan University,
 Shanghai 200433, China; Mathematical Institute, University of Oxford,
 Oxford, OX1, 3LB, UK;
Department of Mathematics, Northwestern University,
 Evanston, IL 60208-2730, USA}
\email{chengq@maths.ox.ac.uk}
\address{Hairong Yuan:
Department of Mathematics, East China Normal University, Shanghai
200241, China; Mathematical Institute, University of Oxford,
 Oxford, OX1, 3LB, UK}
\email{hryuan@math.ecnu.edu.cn,\ \
hairongyuan0110@gmail.com}
\keywords{Uniqueness, transonic
shock-front, Euler system, three-dimensional, free boundary problem,
mixed-composite elliptic-hyperbolic type, iteration mapping,
decomposition, Venttsel problem, nonlocal elliptic operators,
intrinsic structure, subtle interactions}

\subjclass[2000]{35M20, 35J65, 35R35, 35B45, 76H05, 76L05.}
\date{\today}

\begin{abstract}
We establish the local uniqueness of steady transonic shock solutions
with spherical symmetry for the three-dimensional full Euler
equations. These transonic shock-fronts are important for
understanding transonic shock phenomena in divergent nozzles. From
mathematical point of view, we show the uniqueness of solutions of a free
boundary problem for a multidimensional quasilinear system of
mixed-composite elliptic--hyperbolic type. To this end, we develop a
decomposition of the Euler system which works in a general
Riemannian manifold, a method to study a Venttsel problem of
nonclassical nonlocal elliptic operators, and an iteration mapping
which possesses locally a unique fixed point. The approach reveals
an intrinsic structure of the steady Euler system and subtle
interactions of its elliptic and hyperbolic part.
\end{abstract}
\maketitle

%%%----------------------------------------------------------------------------

\section{Introduction}

The study of the Euler equations for compressible fluids is one of
the central topics in the mathematical fluid dynamics, and the
analysis of solutions to the system is of particular interest in
applications.  In particular, in the recent years, important
progress has been made in the analysis of transonic shock solutions
of the steady potential flow equation and the steady Euler system in
multidimensions (cf. \cite{CDSD,ChF1,ChF2,ChF3,CCF,CY,LY,Yu2,Yu3} and the
references cited therein).
% since the paper of Chen-Feldman \cite{ChF1}.

In this paper, we are concerned with the local uniqueness of
transonic shock solutions with spherical symmetry for the
three-dimensional, steady, full Euler system of polytropic gases.
Such a study not only helps us to understand transonic shock phenomena
occurred in divergent nozzles, which have many important applications,
but also provides new insights for the theory of free boundary problems
of partial differential equations of composite--mixed
elliptic--hyperbolic type.
This problem can be formulated as a free boundary problem for the
Euler system in a spherical shell, with the transonic shock-front as
a free boundary, which is a graph of a function defined on
$\mathbf{S}^2$ (the unit $2$--sphere in $\mathbb{R}^3$). Therefore,
for such a problem, although there exists a system of global
Descartes coordinates, it is more convenient to use the local
spherical coordinates and the terminology of differential geometry
(see Appendix A).

Let $r^0<r^1$ be two positive constants. A spherical shell in
$\mathbb{R}^3$ centered at the origin is a Riemannian manifold
$\mathcal{M}=[r^0, r^1]\times\mathbf{S^2}$ with a metric
$G=G_{ij}dx^i\otimes
dx^j$\renewcommand{\thefootnote}{\fnsymbol{footnote}}
\footnote[2]{In this paper we always use the Einstein summation
convention for the Roman indices from $0$ to $2$ and for the Greek
indices from $1$ to $2$.}
\renewcommand{\thefootnote}{\arabic{footnote}} in local spherical
coordinates (see Appendix A). Its boundary $\p\mathcal{M}$ is
$\Sigma^0\cup\Sigma^1$ with
$$
\Sigma^i:=\{(x^0,x)\in\mathcal{M}:
x^0=r^i, x=(x^1,x^2)\in\mathbf{S}^2\}, \qquad i=0,1,
$$
denoting respectively the entry and exit of $\mathcal{M}$.

Let $p, \rho$, and $S$ in $\mathcal{M}$ represent the pressure,
density, and entropy of gas flow in the manifold respectively. For
polytropic gases, $p=A(S)\rho^\gamma$ with the adiabatic exponent $\gamma>1$,
and the sonic speed is
$$
c=\sqrt{\gamma p/\rho}.
$$
Let $u$ be the velocity of fluid flow, which is a vector field in
$\mathcal{M}$ whose integral curves are called fluid trajectories in
$\mathcal{M}$.
Then the steady, full Euler equations for compressible fluids in
$\mathcal{M}$ are (cf. \S 16.5 in \cite{T}):
\begin{eqnarray}
&&\varphi:=\di (\rho u\otimes u)+\grad\, p=0,\label{101}\\
&&\varphi_1:=\di (\rho u)=0,\label{102}\\
&&\varphi_2:=\di (\rho E u)=0,\label{103}
\end{eqnarray}
where  $\di$ and $\grad$ are respectively the divergence and
gradient operator in $\mathcal{M}$, and
$$
E:=\frac{1}{2}G(u,u)+\frac{c^2}{\gamma-1}.
$$
We use $U=(p,\rho,u)$
to denote the state of fluid flow. If $U$ depends only on $x^0$ and
$u=u^0(x^0)\p_0$, then \eqref{101}--\eqref{103} can be reduced to
the following differential equations:
\begin{eqnarray}
&&\frac{\dd u^0}{\dd x^0}=\frac{2c^2u^0}{x^0((u^0)^2-c^2)},\label{1041}\\
&&\frac{\dd \rho}{\dd x^0}=-\frac{2\rho(u^0)^2}{x^0((u^0)^2-c^2)},\label{1042-a}\\
&&\frac{\dd p}{\dd x^0}=-\frac{2\rho
c^2(u^0)^2}{x^0((u^0)^2-c^2)}.\label{1042}
\end{eqnarray}

It has been shown in Yuan \cite{Yu3} that, for equations
\eqref{1041}--\eqref{1042},
given supersonic data
$U_b^-(r^0)=(p_b^-(r^0),\rho_b^-(r^0),u_b^-(r^0))$ on the entry
$\Sigma^0$, there exists an interval $I$ such that, if the back
pressure
$p_b^+(r^1)\in I$, then there exists a unique $r_b\in(r^0,r^1)$ so
that
$$
S_b=\{(x^0,x^1,x^2)\in\mathcal{M}\,:\, x^0=r_b,
(x^1,x^2)\in\mathbf{S}^2\}
$$
is a transonic shock-front, determined
by the Rankine-Hugoniot jump conditions: The flow $U_b^-(x^0)$,
$x^0\in(r^0,r_b),$ ahead of $S_b$ is supersonic; $U_b^+(x^0),
x^0\in(r_b,r^1),$ behind of $S_b$ is subsonic; and the physical
entropy condition $p_b^+(r_b)>p_b^-(r_b)$ holds on $S_b$. We call
such a spherical transonic shock solution $U_b:=(U_b^-, U_b^+; S_b)$
to the Euler equations as a {\em background solution}. The objective
of this paper is to study the uniqueness of transonic shock
solutions, if it exists, in a neighborhood of a class of background
solutions under the three-dimensional perturbations of the upcoming
supersonic flow $U_b^-$ at the entry $\Sigma^0$. This class of
background solutions especially include those background solutions
satisfying the {\it S-Condition} defined in \S 2. In particular, we
prove that, for a given back pressure, for sufficiently large
upstream pressure and Mach number, the background transonic
shock-front itself is locally unique. 

The main theorem of this paper is the following:

\begin{theorem}[Main Theorem]\label{thm101}
Let $U_b$ and $\gamma$ satisfy the S-condition (see Definition {\rm
2.3} in \S {\rm 2}). Then there exist $\varepsilon_0$ and $C_0$ depending
only on $U_b$ and $\gamma$ such that, if the upcoming supersonic
flow $U^-$ on $\Sigma^0$ satisfies
\begin{eqnarray}\label{th101}
\norm{U^--U_b^-}_{C^{3,\alpha}(\Sigma^0)}<\varepsilon\le\varepsilon_0
\end{eqnarray}
for some $\alpha\in(0,1)$ and there exists a  transonic shock
solution $U=(U^-,U^+; S^\psi)$ of \eqref{101}--\eqref{103} in
$\mathcal{M}$ satisfying Property {\rm (A)} below,
then this solution is unique.
Here Property {\rm (A)} consists of the three conditions:
\begin{itemize}
\item[(i)] $S^\psi=\{(x^0,x^1,x^2)\in\mathcal{M}\,:\, x^0=\psi(x^1,x^2),
(x^1,x^2)\in\mathbf{S}^2\}$ is the shock-front; $U^-$ is the
supersonic flow ahead of $S^\psi$; and $U^+$ is the subsonic flow
behind of $S^\psi$. The physical entropy condition:
$p^+|_{S^\psi}>p^-|_{S^\psi}$ holds on $S^\psi$.
Moreover, $\psi\in
C^{4,\alpha}(\mathbf{S}^2)$ satisfies
\begin{eqnarray}\label{th102}
\norm{\psi-r_b}_{C^{4,\alpha}(\mathbf{S}^2)}\le C_0\varepsilon;
\end{eqnarray}
\item[(ii)] The back pressure is
\begin{eqnarray} \label{th103}p(r^1,x^1,x^2)=p_b^+(r^1);
\end{eqnarray}
\item[(iii)] For
$\mathcal{M}_\psi^\pm=\{(x^0,x^1,x^2)\in\mathcal{M}\,:\, \pm (x^0-\psi(x^1,x^2))\ge 0,
 (x^1,x^2)\in\mathbf{S}^2\}$,
 \begin{eqnarray}\label{th104}
\norm{U^\pm-U^\pm_b}_{C^{3,\alpha}(\mathcal{M}_\psi^\pm)}\le
C_0\varepsilon,
\end{eqnarray}
\end{itemize}
with
$$
\norm{U}_{C^{k,\alpha}(\mathcal{M})}:=\norm{(p,
\rho)}_{C^{k,\alpha}(\mathcal{M})}
+\norm{\bar{u}}_{C^{k,\alpha}_1(\mathcal{M})},
$$
where $\bar{u}$ is the $1$-form corresponding to the vector field
$u$ via the Riemannian metric of $\mathcal{M}$, and $C_r^{k,\alpha}$
denotes the space of $r$-forms in $\mathcal{M}$ (i.e.,
${A}^r(\mathcal{M})$) with $C^{k,\alpha}$ components in local
coordinates and the norms are defined in the usual way by partition
of unity in the manifold.
\end{theorem}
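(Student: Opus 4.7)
The plan is to suppose two transonic shock solutions $U_i=(U^-_i, U^+_i; S^{\psi_i})$, $i=1,2$, satisfy the hypotheses of the theorem for the \emph{same} upstream data $U^-$ on $\Sigma^0$, and then to prove $(U^+_1,\psi_1)\equiv(U^+_2,\psi_2)$. The steady Euler system is symmetric hyperbolic in the supersonic regime, and the supersonic part of each solution is determined by a forward Cauchy problem with the prescribed data on $\Sigma^0$; thus $U^-_1\equiv U^-_2$ wherever both are defined. All the work therefore concerns uniqueness of the subsonic state together with the shock location.

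To compare two solutions living on distinct domains $\mathcal{M}^+_{\psi_i}$, I would straighten the free boundary via the diffeomorphism $(y^0,y^1,y^2)=(x^0-\psi(x^1,x^2)+r_b, x^1,x^2)$, pulling both $U^+_i$ back to the fixed spherical shell $[r_b,r^1]\times\mathbf{S}^2$. Writing $V_i$ for the pulled-back subsonic state, $V=V_1-V_2$, and $\phi=\psi_1-\psi_2$, Property (A) forces $\norm{V}_{C^{3,\alpha}}+\norm{\phi}_{C^{4,\alpha}}=O(\varepsilon)$, and $(V,\phi)$ satisfies a linear problem whose principal part is the linearization of \eqref{101}--\eqref{103} around the background $U^+_b$, with coefficients perturbed by $O(\varepsilon)$, Rankine--Hugoniot conditions on $\{y^0=r_b\}$ (now also linear in $\phi$ and its tangential derivatives), the vanishing back-pressure condition on $\{y^0=r^1\}$, and homogeneous data inherited from $U^-_1=U^-_2$.

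The heart of the argument is an intrinsic decomposition of the steady Euler system in the subsonic region, valid in the Riemannian setting of $\mathcal{M}$. Along streamlines, the specific entropy $S$ and the Bernoulli function $B=\tfrac12 G(u,u)+\tfrac{\gamma p}{(\gamma-1)\rho}$ are transported, giving two scalar hyperbolic equations; eliminating these quantities from the momentum and mass equations yields a second order elliptic equation for the pressure coupled with a constraint on the flow direction. The linearization splits accordingly into transport equations for $\delta S,\delta B$, which are integrated from $\Sigma^0$ where they vanish, and an elliptic equation for $\delta p$ on $[r_b,r^1]\times\mathbf{S}^2$. On the shock boundary $\{y^0=r_b\}$, the linearized Rankine--Hugoniot relations express the jumps of $\delta V$ in terms of $\phi$ and $\grad_{\mathbf{S}^2}\phi$, while the eikonal-type relation for $\phi$ itself is driven by the trace of $\delta p$; eliminating $\phi$ produces a Venttsel-type boundary condition on $\delta p$ in which the effective tangential operator on $\mathbf{S}^2$ is nonclassical and nonlocal, because $\phi$ depends on the trace of $\delta p$ through an auxiliary problem on the sphere.

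Finally, I would encode this as an iteration mapping $\mathcal T:(V,\phi)\mapsto(\tilde V,\tilde\phi)$ on a small ball in $C^{3,\alpha}(\mathcal M)\times C^{4,\alpha}(\mathbf{S}^2)$: given $(V,\phi)$, freeze the transported quantities by integrating $\delta S$ and $\delta B$ along the approximate streamlines, solve the elliptic Venttsel problem for the pressure component with the linearized nonlocal boundary condition on the straightened shock together with the back-pressure Dirichlet datum at $\Sigma^1$, reconstruct $\tilde V$ from these ingredients, and then update $\phi\mapsto\tilde\phi$ from the residual Rankine--Hugoniot relation. Any solution of the original free boundary problem satisfying Property (A) is a fixed point of $\mathcal T$ in this ball, so uniqueness reduces to showing $\mathcal T$ is a contraction. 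The principal obstacle, and the step where the S-Condition is essential, is the well-posedness and contractive estimate for the Venttsel problem with the nonclassical nonlocal elliptic boundary operator on $\{y^0=r_b\}$: the S-Condition must furnish the coercivity needed to dominate the unfavorable cross-terms between the elliptic part (pressure) and the hyperbolic part (entropy and Bernoulli function transported from $\Sigma^0$), which, together with the smallness of $\varepsilon_0$, yields a genuine contraction and hence the claimed local uniqueness.
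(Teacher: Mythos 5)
Your plan mirrors the paper's proof: straighten the free boundary, decompose the subsonic Euler system into a nonlocal Venttsel elliptic problem for the pressure together with transport equations for the hyperbolic quantities, set up an iteration mapping $\mathcal T$ on a small ball of perturbations from the background whose unique fixed point any solution with Property (A) must coincide with, and invoke the S-Condition for the well-posedness of the Venttsel problem --- this is precisely the strategy carried out in \S 2--\S 3 of the paper. Two cautionary notes: the S-Condition is not a coercivity statement but a non-degeneracy statement (nonexistence of solutions to a one-parameter family of ODE boundary value problems obtained by spherical-harmonic decomposition of the Venttsel system), and the clause ``reconstruct $\tilde V$ from these ingredients'' hides exactly where an unguarded iteration would lose derivatives --- the paper must (a) solve a div-curl system on $\mathbf{S}^2$ to recover the tangential velocity trace on the shock, (b) transport the vorticity $d\bar u$, (c) transport a provisional $C^{2,\alpha}$ velocity, and (d) solve a Hodge-Laplacian boundary value problem in $\Omega$ to promote the velocity to $C^{3,\alpha}_1$, without which $\mathcal T$ would fail to map the ball into itself.
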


As explained in \cite{CF,Yu3}, the background solutions coincide
with the solutions of the steady quasi-one-dimensional model of flows in
divergent nozzles. Therefore, the above uniqueness result will help
to understand transonic shock phenomena in divergent nozzles, as
well as the effectiveness of the quasi-one-dimensional model
\cite{W}. Also see \cite{Yu1} for an explanation of the
quasi-one-dimensional model from the viewpoint of flows in
Riemannian manifolds and \cite{LY} for the stability result of
transonic shock-fronts for the two-dimensional case.

Apart from the physical implications, the approach by considering
the Euler equations in a Riemannian manifold is of interest itself
in mathematics. We note that some studies have been made for
conservation laws in general Riemannian manifolds (cf. \cite{ABL,ALN,Si}
and the references cited therein). This approach via differential
geometry reveals some intrinsic structures of the steady Euler
system, which are valid in general Riemannian manifolds.

Finally, we remark that the existence and uniqueness of supersonic
flow $U^-$ in $\mathcal{M}$ subject to the initial data
$U^-|_{\Sigma^0}$ satisfying \eqref{th101} follow directly from the
theory of semi-global classical solutions of the Cauchy problem of
quasilinear symmetric hyperbolic systems if $\varepsilon_0$ is
sufficiently small (cf. \cite{Li,T}). Furthermore, one can obtain
\begin{eqnarray}\label{015}
\norm{U^--U^-_b}_{C^{3,\alpha}(\mathcal{M})}\le C_1\varepsilon,
\end{eqnarray}
where $C_1>0$ and $\varepsilon_0>0$  depend solely on $U_b^-(r^0)$
and $r^1$. Thus, we focus on showing the uniqueness of $\psi$ and $U^+$
below. Indeed, what we obtain here is much more than this: We design an
iteration mapping and show that it always has a unique fixed point, and
any solution to  \eqref{101}--\eqref{103} must be a fixed point of
this iteration mapping. Therefore, the solution to
\eqref{101}--\eqref{103} is unique, and then Theorem \ref{thm101} is
proved. However, we have not known whether the fixed point of the
iteration mapping is a solution to the original problem \eqref{101}--\eqref{103},
therefore, the existence problem for solutions to \eqref{101}--\eqref{103} is
still open, though we believe that the ideas and approaches developed here will
be useful to establish an existence theorem which is out of the scope of
this paper.

For simplicity, we write $U^+$ as $U$ from now on. We emphasize here
that the supersonic flow $U^-$ is defined in the whole
$\mathcal{M}$, while, by Proposition 9 in \cite{Yu3}, $U_b^+$ can be
extended to $[r_b-h_b, r^1]\times\mathbf{S}^2$ and still obeys the
Euler system, with $h_b>0$ depending only on $U_b^-(r^0).$

\medskip
The rest of this paper is organized as follows. In \S 2, we derive
some elliptic or transport equations, as well as certain equations
of exterior differential forms in $\mathcal{M}$ and on the
shock-front, from the Euler equations \eqref{101}--\eqref{103} and
the Rankine--Hugoniot jump conditions. Most of the formulas obtained
here are also valid in general Riemannian manifolds. Based on these
decompositions, in \S 3, we present an iteration mapping and
establish the existence of a unique fixed point, which yields Theorem
\ref{thm101}.
Some facts and notations of
differential geometry
are shown and described in Appendix A.

We remark in passing that the analysis and results developed here should be
straightforward
extended to the higher dimensional case,
even general Riemannian manifolds for most of them.

\section{Reduction of the Euler system and Rankine-Hugoniot Jump Conditions}

In this section we introduce a reduction of the Euler system and analyze
the Rankine-Hugoniot jump conditions.

\subsection{The Euler Equations in $\mathcal{M}$}\label{sec102}

We use $d$ to denote the exterior differential operator and $D_u
\omega$ to denote the covariant derivative of  a tensor field
$\omega$ with respect to a vector field $u$ in $\mathcal{M}$; while
$\nabla_u \omega$ is the covariant derivative on $\mathbf{S}^2$.
$\mathcal{L}_u \omega$ is the Lie derivative of $\omega$ with
respect to $u$ in $\mathcal{M}$. The symbol $\Delta$ represents the
Laplacian of forms in $\mathcal{M}$, and $\Delta'$ is the Laplacian
of forms on $\mathbf{S}^2$, which are both positive operators (cf.
\eqref{A.11}).
Note that $\psi\in
C^{4,\alpha}(\mathbf{S}^2)$ defines a mapping from $\mathbf{S}^2$ to
$\mathcal{M}$ by $(x^1,x^2)\mapsto(\psi(x^1,x^2),x^1,x^2)$. We use
$\psi^*$ to denote the pull back of forms and functions induced by
this mapping; for example, for a function $p\in A^0(\mathcal{M})$,
$\psi^*p=p|_{S^\psi}$.  The volume $2$-form of $\mathbf{S}^2$ is
written as $\vol$, and $\vol^3$ is the volume $3$-form of
$\mathcal{M}$.

In the following, we derive some well-known equations from the Euler
system \eqref{101}--\eqref{103} which are valid only for $C^1$ flows
(cf. \cite{T}). Since these involve differentiations of
\eqref{101}--\eqref{103}, the solution of the reduced equations
might not be a solution to the Euler system
\eqref{101}--\eqref{103}. One point below is to express the
relations between these derived equations and the original Euler
equations, which may be useful in the future to verify that the
solution of these derived equations satisfies the Euler system
indeed.

The conservation of mass \eqref{102} can be written (equivalently
for $C^1$ flows) as
\begin{eqnarray}
\varphi_1=D_u \rho+\rho\, \di\, u=0.
\end{eqnarray}
By the identity
\[
\di (u\otimes v)=(\di\, v)u+ D_v u
\]
for two vector fields $u$ and $v$, the momentum equations
\eqref{101} become
\begin{eqnarray}\label{202}
\varphi_0:=\frac{1}{\rho}({\varphi-\varphi_1 u})= D_u
u+\frac{1}{\rho}\grad\, p=0.
\end{eqnarray}
Similarly, the conservation law of energy \eqref{103} may be written as
\begin{eqnarray}\label{106}
\varphi_3:=\frac{1}{\rho}({\varphi_2-E\varphi_1})=D_u E=0.
\end{eqnarray}
This is exactly the well-known Bernoulli law.

Since
\begin{eqnarray*}
\frac 12 D_u\big(|u|^2\big)=G(D_u u, u)=G(\varphi_0-\frac{1}{\rho}
\grad\, p,\, u)=G(\varphi_0, u)-\frac 1\rho D_up,
\end{eqnarray*}
we have
\begin{eqnarray}\label{107}
\varphi_4&:=&\frac{1}{\rho^{\gamma-1}}\big(\varphi_3-G(\varphi_0,
u)\big)=\frac{1}{\rho^{\gamma-1}}\Big(\frac{\gamma}{\gamma-1}D_u\big(A(S)\rho^{\gamma-1}\big)
 -\frac{1}{\rho}D_u(A(S)\rho^\gamma)\Big)\nonumber\\
&=&D_uA(S)=0,
\end{eqnarray}
i.e., the invariance of entropy along the flow trajectories for
$C^1$ flows.

For a vector field $u=u^i\p_i$ in $\mathcal{M}$, we always use
$\bar{u}=u^jG_{ij}dx^i$ to denote its corresponding $1$-form with
respect to the metric $G$ of $\mathcal{M}$. Then \eqref{202} is
equivalent to
\begin{eqnarray}
\bar{\varphi}_0=D_u \bar{u}+
\frac{dp}{\rho}=\mathcal{L}_u\bar{u}-d(\frac{|u|^2}{2})+\frac{dp}{\rho}=0,
\label{108}
\end{eqnarray}
since $ \mathcal{L}_u \bar{u}=D_u\bar{u}+d(\frac{|u|^2}{2})$. This
implies
\begin{equation}\label{183}
\mathcal{L}_ud\bar{u}=-d\big(\frac{1}{\rho}\big)\wedge
dp+d\bar{\varphi}_0,
\end{equation}
which is a transport equation of vorticity. Moreover,
$d\bar{u}|_{S^\psi}$, the initial value of vorticity on $S^\psi$,
expressed in local spherical coordinates, is
\begin{eqnarray}\label{185}
d\bar{u}|_{S^\psi}&=&d\big(\psi^*(u^iG_{ij})\big)\wedge
dx^j-\big(\psi^*\p_0(u^iG_{ij})\big)d\psi\wedge dx^j+\big(\psi
g_{\alpha\beta}\psi^*u^\beta-\psi^*(\frac{\p_\alpha p}{\rho
u^0})\big)dx^0\wedge dx^{\alpha}\nonumber\\
&&\ +\psi^2
g_{\alpha\beta}\psi^*(\frac{dx^\alpha(\varphi_0)}{u^0})dx^0\wedge
dx^\beta,
\end{eqnarray}
where $g=g_{\alpha\beta}dx^\alpha\otimes dx^\beta$ is the standard
metric of $\mathbf{S}^2$.

Let $d^*$ be the codifferential operator in $\mathcal{M}$. Using the
identity $d^*\bar{u}=-\di\, u$, we obtain
\begin{eqnarray}
\varphi_1=-d^*(\rho\bar{u})=0.\label{110}
\end{eqnarray}
Let $\langle\cdot,\, \cdot\rangle$ be the inner product in
$\mathcal{M}$ of forms and let $*$ be the Hodge star operator, which
imply
$$
*\vol^3=1,\qquad
*1=\vol^3,\qquad d(*\bar{u})=(\di\, u)\,\vol^3, \qquad \alpha\wedge*\beta=\langle
\alpha, \beta\rangle\,\vol^3.
$$
Then \eqref{110} is
\begin{eqnarray}
{\varphi}_1=-\rho d^*\bar{u}+\langle d\rho,
\bar{u}\rangle.\label{111}
\end{eqnarray}
Note that, by the equation of state $p=A(S)\rho^\gamma$, we have
$$
\langle d\rho, \bar{u}\rangle=D_u\rho=\frac{D_u p}{c^2}-\frac{\p
p}{\p A}\, \frac{D_u A(S)}{c^2}.
$$
Thus,  by setting
\begin{eqnarray}\label{phi1}
\bar{\varphi}_1:=\frac{\varphi_1}{\rho}+\frac{1}{\rho
c^2}\big(\varphi_2-E\varphi_1-\rho G(\varphi_0,u)\big),
\end{eqnarray}
the equation of conservation of mass may be written as
\begin{eqnarray}
\bar{\varphi}_1=-d^*\bar{u}+\frac{D_u p}{\gamma p}=\di\, u+\frac{D_u
p}{\gamma p}=0.
\end{eqnarray}
By this equation and the definition of the Laplacian of forms
$\Delta=dd^*+d^*d$, we also have
\begin{eqnarray}\label{186}
\Delta {\bar{u}}=d^*(d\bar{u})+d(\frac{D_up}{\gamma
p})-d\bar{\varphi}_1.
\end{eqnarray}

\subsection{The Rankine--Hugoniot Jump Conditions on a Shock-Front}\label{sec103}

A shock-front $S^\psi$ is a hyper-surface in $\mathcal{M}$ across
which the physical variables have a jump. In our case, it can be
expressed as the graph of a mapping $\psi: \mathbf{S}^2\rightarrow
\mathcal{M}$. In local spherical coordinates, we may write
\begin{eqnarray}
S^\psi=\{(x^0,x^1,x^2)\in\mathcal{M}\,:\, x^0=\psi(x^1,x^2),\,
(x^1,x^2)\in\mathbf{S}^2\}.\label{1301}
\end{eqnarray}
The normal vector field and the corresponding normal $1$-form of
$S^\psi$ with respect to $\mathcal{M}$ are
\[
n=(\p_\alpha\psi G^{\alpha\beta}\p_\beta-G^{00}\p_0)|_{S^\psi}\qquad
\text{and}\qquad \bar{n}=(d\psi-dx^0)|_{S^\psi}.
\]

From \eqref{101}--\eqref{103}, the Rankine--Hugoniot jump conditions
(i.e., the R--H conditions) on $S^\psi$ are
\begin{eqnarray}
\lceil G(u,n)\rho u+pn\rfloor|_{S^\psi}=0
&\,\, \Longleftrightarrow\,\,
&\lceil \bar{n}(u)\rho \bar{u}+p\bar{n}\rfloor|_{S^\psi}=0,\label{115}\\
\lceil G(u,n)\rho \rfloor|_{S^\psi}=0&\Longleftrightarrow&
\lceil \bar{n}(u)\rho \rfloor|_{S^\psi}=0,\label{116}\\
\lceil G(u,n)\rho E \rfloor|_{S^\psi}=0&\Longleftrightarrow& \lceil
\bar{n}(u)\rho E \rfloor|_{S^\psi}=0,\label{117}
\end{eqnarray}
where $\lceil\cdot\rfloor$ denotes the jump of a quantity across
$S^\psi$. It is well-known (see \cite{CF,Da}) that a piecewise $C^1$
state $U=(U^-,U^+; S^\psi)$ is a weak entropy solution of
\eqref{101}--\eqref{103} if and only if $U$ satisfies these
equations in $\mathcal{M}_\psi^\pm$ in the classical sense,
the R--H conditions along $S^\psi$, as well as the physical
entropy condition $p^+|_{S^\psi}>p^-|_{S^\psi}$ on $S^\psi$.

Set
\begin{eqnarray}
&&u=u_0+u_1:=u^0\,\p_0+u^\alpha\,\p_\alpha,\\
&&\bar{u}=\bar{u}^0+\bar{u}^1:=u^0G_{00}\, dx^0+u^\alpha
G_{\alpha\beta}\, dx^\beta.\label{222}
\end{eqnarray}
Then $\bar{n}(u)=d\psi(u_1)-u^0$, and \eqref{115} can be separated
into
\begin{eqnarray}
&&\psi^*\lceil pd\psi-g\bar{u}^1\rfloor=0,\label{120}\\
&&\psi^*\lceil g u^0+p\rfloor=0,\label{121}
\end{eqnarray}
where $ g=g(U,\psi,D\psi):=(\rho u^0-d\psi(\rho u_1))|_{S^\psi}$ is
a function on $\mathbf{S}^2$,  with $d\psi\in A^1(\mathbf{S}^2)$
being considered as a $1$-form in $\mathcal{M}$; and the expression
$g(U,\psi,D\psi)$ means that $g$ depends on $U, \psi$, and the first-order
derivatives of $\psi$. Then \eqref{116}--\eqref{117} become
\begin{eqnarray} \psi^*\lceil
g\rfloor=0,\qquad \psi^*\lceil E\rfloor=0.\label{123}
\end{eqnarray}
Equations \eqref{106} and \eqref{123} indicate that $E$ is a
constant along the same trajectory even across the shock-front.
Therefore, we may write $E|_{S^\psi}=E_0(x), x\in \mathbf{S}^2$,
with $E_0(x)$ a given function depending only on the supersonic data
at the entry.

Now, if $\psi^*\lceil p\rfloor\ne 0$ (which is guaranteed later by
the physical entropy condition of the background solution),
\eqref{120} yields
\begin{eqnarray}
d\psi=\omega:=\frac{g\psi^*(\lceil \bar{u}^1\rfloor)}{\psi^*\lceil
p\rfloor}=\mu_0\psi^*(\bar{u}^1)+g_0(U,U^-,\psi,D\psi)\in
A^1(\mathbf{S}^2) \label{124}
\end{eqnarray}
by using the first equation in \eqref{123}, with
$$
\mu_0:=\left.\frac{(\rho u^0)_b^+}{p_b^+-p_b^-}\right|_{S_b}=\left.
 \frac{\gamma+1}{2}\frac{\big((u^{0})_b^+\big)^2}{\big(c^2-(u^{0})^2\big)_b^+}\right|_{S_b}
 >0,
$$
and $g_0$ the higher order term defined below (see Definition
\ref{def102})\footnote{From now on, we always use
$g_i$ to denote the higher order terms on $S^\psi$, and $f_i$ to
denote the higher order terms in $\mathcal{M}_\psi^+.$}, which
contains $U,U^-,\psi$, and $D\psi$ (the first-order derivatives of
$\psi$). We note that
equations \eqref{121}--\eqref{124} are equivalent to
\eqref{115}--\eqref{117}. Equation \eqref{124} also indicates
$d\omega=0$. Therefore, we have
\begin{eqnarray}\label{125}
d(\psi^*\bar{u}^1)=\chi(U,U^-,\psi):=-\frac{dg_0}{\mu_0}.
\end{eqnarray}

\begin{definition}\label{def101}
A constant $r^p$ is called the {\it position} of the surface
$S^\psi$ defined by \eqref{1301} provided that
$\int_{\mathbf{S}^2}(\psi-r^p)\,\vol=0.$ The function
$\psi^p:=\psi-r^p$ is called the {\it profile} of $S^\psi$.
\end{definition}

\begin{remark}
The reason why we distinguish the ``position" and the ``profile" is
that they are determined by different mechanisms: The ``profile" is
determined by the R--H conditions, while the ``position" is
determined by the solvability conditions closely related to the
conservation of mass.
\end{remark}

\begin{definition}\label{def102}
Let $\hat{U}=U^+-U_b^+.$ A higher order term is an expression
that contains either

(i) $U^--U_b^-$ and its first-order derivatives;

\noindent
or

(ii) the products of $\psi^p,\ r^p-r_b, \hat{U}$, and their
derivatives $D\hat{U},D^2\hat{U}, D\psi,D^2\psi$, and $D^3\psi$,
where $D^ku$ are the $k^{th}$-derivatives of $u$ in local
coordinates.
\end{definition}

Next, we linearize the R--H conditions \eqref{121}--\eqref{123}. We
write them  equivalently as
$$
G_i(\psi^*U, \psi^*U^-)=\Psi_i(\psi^*U, \psi^*U^-,D\psi),\qquad
i=1,2,3,
$$
with
\begin{align}
& G_1=\psi^*\lceil \rho(u^0)^2+p\rfloor, & \Psi_1&=\psi^*(\lceil
d\psi(\rho u^0u_1)\rfloor),\\
& G_2=\psi^*\lceil \rho u^0\rfloor, & \Psi_2&=\psi^*(\lceil
d\psi(\rho u_1)\rfloor),\\
& G_3=\psi^*\lceil E\rfloor, & \Psi_3&=0.
\end{align}
As in \cite{LY}, since $G_i(U_b^+(r_b,x),U_b^-(r_b,x))=0$ for
$x=(x^1,x^2)\in\mathbf{S^2},$ we have
\begin{eqnarray}\label{1061}
&&\p_+G_i(U_b^+(r_b,x),U_b^-(r_b,x))\bullet\big(U(\psi(x),x)
-U_b^+(\psi(x),x)\big)\\
&&=\Big\{-\big(\p_+G_i(U_b^+(\psi(x),x),U_b^-(\psi(x),x))
-\p_+G_i(U_b^+(r_b,x),U_b^-(r_b,x))\big)\nonumber\\
&&\qquad\quad\,\,\,\bullet\big(U(\psi(x),x)-U_b^+(\psi(x),x)\big)\nonumber\\
&&\qquad+\p_+G_i(U_b^+(\psi(x),x),U_b^-(\psi(x),x))\bullet\big(U(\psi(x),
x)-U_b^+(\psi(x),x)\big)\nonumber\\
&&\qquad
-\big(G_i(U(\psi(x),x),U^-(\psi(x),x))-G_i(U_b^+(\psi(x),x),
U^-(\psi(x),x))\big)\nonumber\\
&&\qquad
-\big(G_i(U_b^+(\psi(x),x),U^-(\psi(x),x))-G_i(U_b^+(\psi(x),
x),U_b^-(\psi(x),x))\big) +\Psi_i\Big\}\nonumber\\
&&\quad-\Big\{G_i(U_b^+(\psi(x),x),U_b^-(\psi(x),x))-G_i(U_b^+(r_b,x),
U_b^-(r_b,x))\Big\}\nonumber\\
&&=:\Rmnum{1}_i + \Rmnum{2}_i,\nonumber
\end{eqnarray}
where we use ``$\bullet$" as the scalar product of vectors in the
phase (Euclidean) space, and $\p_+G_i(U,U^-)$ and $\p_-G_i(U,U^-)$ as the
gradient of $G_i(U,U^-)$ with respect to the variables $U$ and $U^-$,
respectively.

By the Taylor expansion, the terms in ${\Rmnum{1}_i}$
are of higher order. However,
\begin{eqnarray*}
&&\Rmnum{2}_1=\frac{2}{r_b}\big(p_b^+(r_b)-p_b^-(r_b)\big)(\psi^p
+r^p-r_b)+O(|\psi-r_b|^2), \qquad\,\,\Rmnum{2}_j=0,\ \ j=2,3.
\end{eqnarray*}
The Landau symbol $O(|\psi|^2)$ means the terms of order at least to
be two of $\psi$. One can also obtain
\begin{eqnarray*}
d_0
&:=&\det\Big(\left.\frac{\p(G_1,G_2,G_3)}
{\p(u^0,p,\rho)}\Big)\right|_{{(U_b^-,U_b^+; S_b)}}\nonumber
\\
&=&\det \left.\left(
  \begin{array}{ccc}
    2\rho_b^+ (u^0)_b^+ & 1 & ((u^0)_b^+)^2 \\
    \rho_b^+ & 0 & (u^0)_b^+ \\
    (u^0)_b^+ & \frac{\gamma}{\gamma-1}\frac{1}{\rho_b^+}
    & -\frac{(c^2)_b^+}{\gamma-1}\frac{1}{\rho_b^+} \\
  \end{array}
\right)\right|_{x^0=r_b}\nonumber\\
&=&\frac{(c^2-(u^0)^2)_b^+(r_b)}{\gamma-1}>0.
\end{eqnarray*}
Thus, \eqref{1061} is equal to
\begin{eqnarray*}
&&\left.\left(
  \begin{array}{ccc}
    2\rho_b^+ (u^0)_b^+ & 1 & ((u^0)_b^+)^2 \\
    \rho_b^+ & 0 & (u^0)_b^+ \\
    (u^0)_b^+ & \frac{\gamma}{\gamma-1}\frac{1}{\rho_b^+}
    & -\frac{(c^2)_b^+}{\gamma-1}\frac{1}{\rho_b^+} \\
  \end{array}
\right)\right|_{x^0=r_b}
\left(  \begin{array}{c}
     \psi^*(\widehat{u^0})\\
    \psi^*\hat{p} \\
    \psi^*\hat{\rho} \\
  \end{array}
\right)\nonumber\\
&&=
\left(
  \begin{array}{c}
     -\frac{2}{r_b}\big(p_b^+(r_b)-p_b^-(r_b)\big)\\
    0 \\
    0 \\
  \end{array}
\right)(\psi-r_b)+\text{h.o.t.},
\end{eqnarray*}
where {\it h.o.t.} represents  the {\it higher order terms} for
short.

We can solve this linear system to obtain
\begin{eqnarray}
&&\psi^*(\widehat{u^0})=\mu_1\,(\psi^p+r^p-r_b)+g_1(U,U^-,\psi,D\psi),
\label{143}\\
&&\psi^*(\hat{p})=\mu_2\,(\psi^p+r^p-r_b)+g_2(U,U^-,\psi,D\psi),
\label{144}\\
&&\psi^*(\hat{\rho})=\mu_3\,(\psi^p+r^p-r_b)+g_3(U,U^-,\psi,D\psi).
\label{145}
\end{eqnarray}
Using $A(S)=p\rho^{-\gamma}$, we also obtain
\begin{eqnarray}
\psi^*(\widehat{A(S)})=\mu_4\,(\psi^p+r^p-r_b)+g_4(U,U^-,\psi,D\psi),\label{146}
\end{eqnarray}
where
\begin{eqnarray*}
&&\mu_1=\frac{4\gamma(u^0)^+_b(r_b)}{(\gamma+1)r_b}>0,\qquad
\mu_2=-\frac{4\rho_b^+(r_b)}{(\gamma+1)r_b}\big((\gamma-1)(u^0)^2+c^2\big)_b^+(r_b)<0,\\
&&\mu_3=-\frac{4\gamma\rho_b^+(r_b)}{(\gamma+1)r_b}<0,\qquad
\mu_4=\frac{4(\gamma-1)}{(\gamma+1)r_b(\rho_b^+(r_b))^{\gamma-1}}
   (c^2-(u^0)^2)_b^+(r_b)>0.
\end{eqnarray*}

\subsection{Restriction of the Conservation of Mass and Momentum on the Shock-Front}
\label{sec107}

We now calculate $d^*(\psi^*\bar{u}_1)$ by restricting the equations
of conservation of mass and momentum on the shock-front $S^\psi$ and
obtain a second-order elliptic equation for $\psi^p$.

In local spherical coordinates, we have
\begin{eqnarray*}
d^*(\psi^*\bar{u}^1)&=&-\di(\psi^*(u^\alpha
G_{\alpha\beta})g^{\beta\gamma}\p_{\gamma})\nonumber\\
&=&-\frac{1}{\sqrt{g}}\p_\alpha(\sqrt{g}\psi^2\psi^*u^\alpha)\nonumber\\
&=&-\psi^2\psi^*\big(\frac{1}{\sqrt{G}}\p_\alpha(\sqrt{G}u^\alpha)\big)
-\psi^*\p_0\big((x^0)^2d\psi(u_1)\big)\nonumber\\
&=&-\psi^2\psi^*\big(\di\,
u-\frac{1}{\sqrt{G}}\p_0(\sqrt{G}u^0)\big)-\psi^*\p_0\big((x^0)^2d\psi(u_1)\big)\nonumber\\
&=&\psi^2\psi^*(d^*\bar{u})+\psi^2\psi^*\big(\frac{1}{\sqrt{G}}\p_0(\sqrt{G}u^0)\big)
-\psi^*\p_0\big((x^0)^2d\psi(u_1)\big)\nonumber\\
&=&\psi^2\psi^*\big(\frac{D_up}{\gamma
p}-\bar{\varphi}_1\big)+\psi^*\p_0\big((x^0)^2u^0\big)
-\psi^*\p_0\big((x^0)^2d\psi(u_1)\big).
\end{eqnarray*}
On the other hand, from the conservation of momentum,
\begin{eqnarray*}
\psi^*\big(dx^0(\varphi_0)\big)&=&\psi^*\big(u^0\p_0u^0\big)+\psi^*\big(\frac{\p_0
p}{\rho}\big)+\psi^*\big(u^\alpha\p_\alpha u^0+u^\alpha
u^\beta\Gamma_{\alpha\beta}^0\big)\nonumber\\
&=&\psi^*\big(u^0\p_0u^0\big)+\psi^*\big(\frac{\p_0p}{\rho}\big)
+\nabla_{\psi^*u_1}(\psi^*u^0)\nonumber\\
&&-d\psi(\psi^*u_1)\big(\psi^*\p_0u^0\big)-\psi\,
g(\psi^*u_1,\psi^*u_1),
\end{eqnarray*}
where we have set $\psi^*u_1:=(\psi^*u^\alpha)\p_\alpha$ which is a
well-defined vector field on $\mathbf{S}^2$,
$\nabla_{\psi^*u_1}(\psi^*u^0)$ is a higher order term, and
$\Gamma_{\alpha\beta}^0$ are the Christoffel symbols (see Section
A.1). Then we obtain
\begin{eqnarray*}
d^*(\psi^*\bar{u}^1)=\Rmnum{1}+\Rmnum{2}+\Rmnum{3},
\end{eqnarray*}
with
\begin{eqnarray*}
&&\Rmnum{1}=\psi^2\psi^*\Big(\frac{dx^0(\varphi_0)}{u^0}-\bar{\varphi}_1\Big)
=\psi^2\psi^*\Big(\frac{dx^0(\varphi_0)}{(u^0)_b^+}-\bar{\varphi}_1\Big)
+\text{h.o.t.},\\
&&\Rmnum{2}=\psi^2\psi^*\Big(\frac{(u^0)^2-c^2}{\gamma pu^0}\p_0
p\Big)+2\psi\psi^*(u^0),\\
&&\Rmnum{3}=\frac{\psi^2}{\psi^*u^0}\big(\psi\, |\psi^*u_1|^2
+d\psi(\psi^*u_1)\, (\psi^*\p_0u^0)-\nabla_{\psi^*u_1}(\psi^*u^0)\big)\nonumber\\
&&\qquad-\psi^*\p_0\big((x^0)^2d\psi(u_1)\big)+\frac{\psi^2}{\gamma
p}\big(\nabla_{\psi^*u_1}(\psi^*p)-d\psi(\psi^*u_1)(\psi^*\p_0p)\big),
\end{eqnarray*}
where, in the term $\Rmnum{1}$, we used that $\varphi_0=
\varphi_0-(\varphi_0)_b^+$ is small so that
$dx^0(\varphi_0)(\frac{1}{u^0}-\frac{1}{u^0_b})$ is a higher order
term.

Note that, for the background solution, $\Rmnum{2}=0$. Then the
Taylor expansion and the boundary conditions
\eqref{143}--\eqref{146} yield
\begin{eqnarray}\label{155}
\Rmnum{2}=\mu_5\, \psi^*(\p_0\hat{p})
 +\mu_6\, \psi^p +\mu_6\,(r^p-r_b)+O(|(\psi^p,r^p-r_b,
\psi^*(\hat{U}))|^2),
\end{eqnarray}
with
\begin{eqnarray*}
&&\mu_5=\left. \frac{r_b^2((u^0)^2-c^2)_b^+}{\gamma p_b^+ (u^0)_b^+}\right|_{r=r_b}<0,\\
&&\mu_6=\frac{8\gamma
(u^0)_b^+(r_b)}{(\gamma+1)(1-t_s)}\big((\gamma-1)t^2_s+t_s+1\big)>0,
\end{eqnarray*}
where $t_s=t(r_b)
=(M_b^+)^2(r_b)\in(0,1)$, and
$M_b^+(r_b)=\frac{(u^0)_b^+(r_b)}{c_b^+(r_b)}$ is the Mach number of
the flow behind the transonic shock-front of the background
solution.

We note that the term $\Rmnum{3}$  consists of higher order terms. Then we
obtain
\begin{eqnarray}\label{156} d^*(\psi^*\bar{u}^1)&=&\mu_5\,
\psi^*(\p_0\hat{p})+\mu_6\,
\psi^p+\mu_6\,(r^p-r_b)\nonumber\\
&&+g_5(U,U^-,\psi,DU,D\psi)+
\psi^2\psi^*\Big(\frac{dx^0(\varphi_0)}{(u^0)_b^+}-\bar{\varphi}_1\Big).
\end{eqnarray}
Therefore, by \eqref{124}, we have
\begin{eqnarray}\label{psieq}
\Delta'\psi^p+\mu_7\psi^p&=&\mu_0\mu_6(r^p-r_b)+\mu_0\mu_5\,\psi^*\p_0\hat{p}\nonumber\\
&&+g_6(U,U^-,\psi,DU,D\psi,D^2\psi)+\mu_0\psi^2\psi^*
\Big(\frac{dx^0(\varphi_0)}{(u^0)_b^+}-\bar{\varphi}_1\Big),
\end{eqnarray}
with $g_6=\mu_0g_5+d^*g_0$ and $\mu_7=-\mu_0\mu_6<0$.

By \eqref{156} and the divergence theorem, we choose
\begin{eqnarray}\label{req}
r^p-r_b=-\frac{1}{4\pi\mu_6}\int_{\mathbf{S}^2}\Big(\mu_5\,
\psi^*(\p_0\hat{p})+g_5(U,U^-,\psi,DU,D\psi)\Big)\, \vol.
\end{eqnarray}
Substituting this into \eqref{144}, we obtain
\begin{eqnarray}
\psi^p=\frac{1}{\mu_2}\left(\psi^*\hat{p}-{\mu_8}
\int_{\mathbf{S}^2}\psi^*(\p_0\hat{p})\,\vol+{g_7(U,U^-,\psi,D\psi)}\right),
\label{177}
\end{eqnarray}
with $\mu_8=-\frac{\mu_2\mu_5}{4\pi\mu_6}<0$ and
$g_7=\frac{\mu_2}{4\pi\mu_6}\int_{\mathbf{S}^2}g_5\,\vol-g_2.$
Therefore, from \eqref{psieq}, we also obtain an equation for the
pressure on $S^\psi$:
\begin{eqnarray}\label{178}
&&\Delta'(\psi^*\hat{p})+\mu_7(\psi^*{\hat{p}})+\mu_9(\psi^*\p_0\hat{p})\nonumber\\
&&=g_8(U,U^-,\psi,DU,D\psi,D^2U,D^2\psi,D^3\psi)
 +\mu_2\mu_0\psi^2\psi^*\big(\frac{dx^0(\varphi_0)}{(u^0)_b^+}-\bar{\varphi}_1\big),
\end{eqnarray}
where $\mu_9=-\mu_0\mu_2\mu_5<0$ and
$g_8=\Delta'g_2+\mu_7g_2+\mu_0\mu_2g_5+\mu_2d^*g_0.$

\subsection{An Elliptic Equation for the Pressure in $\mathcal{M}_\psi^+$}

First, we note the following tensor identity:
\begin{eqnarray*}
\di (D_uu)-D_u(\di u)=C_1^1C^1_2(Du\otimes Du)+\mathrm{Ric}(u,u),
\end{eqnarray*}
where $Du$ is the covariant differential of the vector field $u$ in
a Riemannian manifold, $\mathrm{Ric}(\cdot, \cdot)$ is the Ricci
curvature tensor, and $C^i_j(T)$ is the contraction on the upper $i$
and lower $j$ indices of a tensor $T$. In our case, since
$\mathcal{M}$ is flat, $\mathrm{Ric}(u,u)\equiv0.$ From the Euler
equations, we have
\begin{eqnarray*}
\di (D_uu)-D_u(\di\, u)=\di\,
\varphi_0-D_u\bar{\varphi}_1+D_u\big(\frac{D_u p}{\gamma p}\big)-\di
\big(\frac{\grad\, p}{\rho}\big),
\end{eqnarray*}
while direct calculation yields that, for $\p_0^2=\p_0\p_0,$
\begin{eqnarray*}
D_u\big(\frac{D_u p}{\gamma p}\big)-\di \big(\frac{\grad\,
p}{\rho}\big)&=&\frac{1}{\gamma
p}\Big(((u^0)^2-c^2)\p_0^2p-\frac{2c^2}{x^0}\p_0p
+\frac{c^2}{(x^0)^2}\Delta'p\Big)\nonumber\\
&&+\frac{1}{\gamma p^2}\Big((c^2-(u^0)^2)
(\p_0p)^2+p\p_0p\, \p_0\big(\frac{(u^0)^2}{2}-c^2\big)\Big)\nonumber\\
&&+f_1(U,DU,D^2U),\\
 C^1_1C^1_2(D u\otimes D
u)-\frac{2u^0}{x^0}\bar{\varphi}_1&=&(\p_0
u^0)^2-\frac{2}{(x^0)^2}(u^0)^2-\frac{1}{x^0}\p_0(u^0)^2-\frac{2}{x^0}
\,\frac{(u^0)^2}{\gamma p}\p_0 p\nonumber\\
&&+f_2(U,DU,D^2U).
\end{eqnarray*}
Now the point is that the above right-hand sides may be expressed as
functions of $\p_0^2p, \p_0p, p$, $\psi^*A(S)$, and some higher
order terms.

Indeed, due to the conservation of momentum,
\begin{eqnarray*}
\p_0u^0=\frac{1}{u^0}dx^0(\varphi_0)-\frac{1}{\rho
u^0}\p_0p+f_3(U,DU),
\end{eqnarray*}
where $f_3=\frac{1}{x^0u^0}G(u_1,u_1)-\frac{1}{u^0}D_{u_1}u^0$  is a
higher order term. From the Bernoulli law,
\begin{eqnarray*}
\p_0(u^0)^2=-\frac{2}{\gamma-1}\p_0(c^2)+\frac{2\varphi_3}{u^0}+f_4(U,DU).
\end{eqnarray*}
By the equation of state,
\begin{eqnarray*}
\p_0(c^2)=(\gamma-1)p^{-\frac{1}{\gamma}}A(S)^{\frac{1}{\gamma}}\p_0
p
+p^{\frac{\gamma-1}{\gamma}}A(S)^{\frac{1-\gamma}{\gamma}}\p_0A(S).
\end{eqnarray*}
However, the invariance of entropy implies that
\begin{eqnarray*}
\p_0A(S)=\frac{\varphi_4}{u^0}+f_5(U,DU).
\end{eqnarray*}
Therefore, we have
\begin{eqnarray}\label{170}
A(S)(x^0,x)=\psi^*A(S)+\int_{\psi(x)}^{x^0}\frac{\varphi_4}{u^0}(s,x)\,\dd
s+f_6(U,\psi,DU),
\end{eqnarray}
and
\begin{eqnarray*}
&&c^2=\gamma p^{\frac{\gamma-1}{\gamma}}\big(\psi^*
A(S)\big)^{\frac{1}{\gamma}}+L_1(\varphi_4)+f_7(U,\psi,DU),\\
&&(u^0)^2=2E_0(x)-\frac{2\gamma}{\gamma-1}p^{\frac{\gamma-1}{\gamma}}
\big(\psi^*A(S)\big)^{\frac{1}{\gamma}}+L_2(\varphi_3,\varphi_4)+f_8(U,\psi,DU),
\end{eqnarray*}
where $L_1$ and $L_2$ do not involve the derivatives of $\varphi_i$, and
$L_1(0)=L_2(0,0)=0$.   We then have
\begin{eqnarray}\label{172}
&&\rho x_0^2\Big(\di
\varphi_0-D_u\bar{\varphi}_1-\frac{2u^0}{x^0}\bar{\varphi}_1+L_3(dx^0(\varphi_0),
\varphi_3,\varphi_4)\Big)\nonumber\\
&&= x_0^2\big(1-\frac{(u^0)^2}{c^2}\big)\p_0^2p
-\Delta'p+2x_0\big(2-\frac{(u^0)^2}{c^2}\big)\p_0p\nonumber\\&&
\quad+\frac{x_0^2}{p}\big(\frac{(u^0)^2}{c^2}+\frac
{c^2}{\gamma(u^0)^2}\big)(\p_0p)^2+\frac{4\gamma}{\gamma-1}p-4E_0(x)p^{\frac{1}
{\gamma}}\big(\psi^*A(S)\big)^{-\frac{1}{\gamma}}\nonumber\\
&&=e_1\,\p_0^2\hat{p}-\Delta'\hat{p}+e_2\, \p_0
\hat{p}+e_3\,\hat{p}+e_4\,\psi^*\hat{p}+f_{9}(U,U^-,\psi,DU,D\psi,D^2U)
\end{eqnarray}
by \eqref{144} and \eqref{146}, where $L_3(0,0,0)=0$ and
$e_i=e_i(x^0)$, $i=1,\cdots,4$, are known functions determined by
the background solution:
\begin{eqnarray*}
e_1&=&(x^0)^2(1-t)>0,\\
e_2&=&\frac{2x^0}{1-t}\big((1+2\gamma)t^2-3t+4\big)>0,\\
e_3&=&\frac{-2}{(t-1)^3}\big(6-19t-7t^2(-2+\gamma)+t^4\gamma(1+2\gamma)
  +t^3(-3+2\gamma-4\gamma^2)\big),\\
%\cdot s_1(t,\gamma),\\
e_4&=&\frac{\mu_4}{\mu_2}\,\frac{2\rho^\gamma(2+(\gamma-1)t)}{(\gamma-1)
(t-1)^3}\, \big((2\gamma-3)t^2+8t-3\big)\nonumber\\
&=& \frac{1-t_s}{\big(\rho_b^+(r_b)\big)^\gamma(1+(\gamma-1)t_s)}\,
\frac{2\rho^\gamma(2+(\gamma-1)t)}{(1-t)^3}\,
\big((2\gamma-3)t^2+8t-3\big),
\end{eqnarray*}
where
$t=t(x^0)
=(M_b^+)^2(x^0)\in (0,1)$ and
$t_s=t(r_b)=\Big(\frac{(u^0)_b^+(r_b)}{c_b^+(r_b)}\Big)^2=(M_b^+)^2(r_b)\in
(0,1)$ with $M_b^+(x^0)=\frac{(u^0)_b^+(x^0)}{c_b^+(x^0)}$.
Since $e_1>0$, \eqref{172} is {\it an elliptic equation} for
$\hat{p}$.

We also recall that $t(x^0)$ is monotonically decreasing for the
background solution  and satisfies the following differential
equation (cf. \cite{Yu3}):
\begin{equation}\label{b16}
\frac{\dd t}{\dd x^0}=\frac{2t}{x^0}\,\frac{2+(\gamma-1)t}{t-1}.
\end{equation}

\subsection{The Normalization of $\mathcal{M}_\psi^+$ and Reduced Equations}

The above equations and boundary conditions are obtained in
$\mathcal{M}_\psi^+$ for the given subsonic flow $U$ and the
shock-front $\psi$ satisfying Theorem \ref{thm101}. The computations
are relatively easy for the sake of rather simple metric $G$.
However, to 
show the uniqueness of the transonic shock-front and the subsonic
flow behind it,  we need to set up an iteration mapping to find a new
$\hat{\psi}\in \mathcal{K}_\sigma$:

\begin{eqnarray} \label{258}
\mathcal{K}_\sigma:=\left\{\psi\in C^{4,\alpha}(\mathbf{S}^2)\,:\,
\norm{\psi-r_b}_{C^{4,\alpha}(\mathbf{S}^2)}\le
\sigma\le\sigma_0\right\}
\end{eqnarray}
for a positive constant $\sigma_0$ to be specified later, by
solving several boundary value problems in $\mathcal{M}_\psi^+$ for
any $\psi\in \mathcal{K}_\sigma$, and then show that there is a fixed
point that is unique. Therefore, it is convenient to
introduce a $C^{4,\alpha}$--homeomorphism $ \Psi:
(x^0,x)\in\mathcal{M}_\psi^+\mapsto ({y}^0,y)\in
\Omega:=[0,1]\times\mathbf{S}^2$ by
\begin{eqnarray}\label{259}
{y}^0=\frac{x^0-\psi(x)}{r^1-\psi(x)},\qquad y=(y^1, y^2)=(x^1,
x^2),
\end{eqnarray}
to normalize $\mathcal{M}_\psi^+$ to $\Omega$. We set
$\Omega^{j}=\{j\}\times\mathbf{S}^2, j=0,1$. Then
$\p\Omega=\Omega^0\cup\Omega^1$. We will use $i$ to denote the
embedding of $\Omega^0$ in $\Omega$.
We also define the metric of $\Omega$ to be
$$
\tilde{G}
=(r^1-r_b)^2dy^0\otimes dy^0+\big((r^1-r_b)y^0+r_b\big)^2g,
$$
which
differs from the metric induced by $\Psi$ only  those terms
involving $D\psi$ or $\psi-r_b$. Therefore, according to
\eqref{222}, we define, for a vector field $u$ in $\Omega$, the
corresponding $\bar{u}^1=u^\alpha\tilde{G}_{\alpha\beta}dy^\beta$.
In the following, $\p_i=\frac{\p}{\p y^i}$ in $\Omega$ for short.
Then \eqref{185} can be written on $\Omega^0$ as
\begin{eqnarray}\label{261}
d\bar{u}|_{\Omega^0}&=&d(\bar{u}|_{\Omega^0})+E_1(U,\psi,DU,D\psi,D^2\psi)|_{\Omega^0}\wedge
d\psi\nonumber\\
&&+(r^1-r_b)\Big((r^1-r_b)r_bg_{\alpha\beta}u^\beta|_{\Omega^0}
 -\left. \frac{\p_\alpha p}{\rho_b^+(u^0)_b^+}\right|_{\Omega^0}\Big)dy^0\wedge dy^\alpha\nonumber\\
&&+g_9(U,\psi,DU,D\psi) +\left.
(r^1-r_b)\,\psi^2g_{\alpha\beta}\frac{dy^\alpha(\varphi_0)}{u^0}\right|_{\Omega^0}dy^0\wedge
dy^\beta,
\end{eqnarray}
where $E_1$ is a 1-form in $\Omega$ depending smoothly on
$U,\psi,DU,D\psi$, and $D^2\psi$. Equations \eqref{124}--\eqref{125}
on $\Omega^0$ are respectively
\begin{eqnarray}
&&d\psi=\omega:=\mu_0i^*(\bar{u}^1)+\bar{g}_0(U,U^-,\psi,D\psi)\in
A^1(\mathbf{S}^2), \label{262}\\
\label{263}
&&d(i^*\bar{u}^1)=\chi(U,U^-,\psi):=-\frac{d\bar{g}_0}{\mu_0}.
\end{eqnarray}
Similarly, \eqref{143}--\eqref{146} are transferred to
\begin{eqnarray}
&&i^*(\widehat{u^0})=\bar{\mu}_1\,(\psi^p+r^p-r_b)+\bar{g}_1(U,U^-,\psi,D\psi),\label{264}\\
&&i^*(\hat{p})=\mu_2\,(\psi^p+r^p-r_b)+\bar{g}_2(U,U^-,\psi,D\psi),\label{265}\\
&&i^*(\hat{\rho})=\mu_3\,(\psi^p+r^p-r_b)+\bar{g}_3(U,U^-,\psi,D\psi),\label{266}\\
&&i^*(\widehat{A(S)})=\mu_4\,(\psi^p+r^p-r_b)+\bar{g}_4(U,U^-,\psi,D\psi),\label{267}
\end{eqnarray}
where $\bar{\mu}_1=\frac{\mu_1}{r^1-r_b}>0$. In addition, we have
\begin{eqnarray}\label{268}
\eqref{156}&\Longleftrightarrow&
d^*(i^*\bar{u}^1)=\frac{\mu_5}{r^1-r_b} i^*(\p_0\hat{p})+\mu_6\,
\psi^p+\mu_6\,(r^p-r_b)\nonumber\\
&&\qquad\qquad\,\,\,\, +\bar{g}_5(U,U^-,\psi,DU,D\psi)+
\psi^2i^*\Big((r^1-r_b)\frac{dy^0(\varphi_0)}{(u^0)_b^+}-\bar{\varphi}_1\Big),\\
\label{269} \eqref{psieq}&\Longleftrightarrow &
\Delta'\psi^p+\mu_7\psi^p=\mu_0\mu_6(r^p-r_b)+
\frac{\mu_0\mu_5}{r^1-r_b}i^*(\p_0\hat{p})+\bar{g}_6(U,U^-,\psi,DU,D\psi,D^2\psi)\nonumber\\
&&\qquad\qquad\quad\qquad
+\mu_0\psi^2i^*
\big((r_1-r_b)\frac{dy^0(\varphi_0)}{(u^0)_b^+}-\bar{\varphi}_1\big),\qquad\\
\label{270} \eqref{req} &\Longleftrightarrow &
r^p-r_b=-\frac{1}{4\pi\mu_6}\int_{\mathbf{S}^2}\Big(\frac{\mu_5}{r^1-r_b}i^*(\p_0\hat{p}
)+\bar{g}_5(U,U^-,\psi,DU,D\psi)\Big)\, \vol,\\
\eqref{177} &\Longleftrightarrow &
\psi^p=\frac{1}{\mu_2}\Big(i^*\hat{p}-\frac{\mu_8}{r^1-r_b}
\int_{\mathbf{S}^2}i^*(\p_0\hat{p})\,\vol+{\bar{g}_7(U,U^-,\psi,D\psi)}\Big),\label{271}\\
\eqref{178}&\Longleftrightarrow&
\Delta'(i^*\hat{p})+\mu_7(i^*{\hat{p}})+\frac{\mu_9}{r^1-r_b}\,i^*(\p_0\hat{p})
=\bar{g}_8(U,U^-,\psi,DU,D\psi,D^2U,D^2\psi,D^3\psi)\nonumber\\
&&\qquad\qquad\qquad\qquad\qquad\qquad\qquad\qquad
  +\mu_2\mu_0\psi^2i^*\big((r^1-r_b)\frac{dy^0(\varphi_0)}{(u^0)_b^+}
-\bar{\varphi}_1\big),\qquad \label{272}\\
\label{273} \eqref{172}&\Longleftrightarrow& (r^1-\psi)^2\rho
x_0^2\Big(\di
\varphi_0-D_u\bar{\varphi}_1-\frac{2(r^1-r_b)(u^0)_b^+}{(r^1-r_b)y^0+r_b}
\,\bar{\varphi}_1+L_3((r^1-r_b)\,dy^0(\varphi_0),
\varphi_3,\varphi_4)\Big)\nonumber\\
&&\qquad=e_1\,\p_0^2\hat{p}-(r^1-r_b)^2\Delta'\hat{p}+(r^1-r_b)e_2\,\p_0
\hat{p}+(r^1-r_b)^2e_3\,\hat{p}+(r^1-r_b)^2e_4\,
i^*\hat{p}\nonumber\\
&&\qquad\,\,\,\,\,-\bar{f}_{9}(U,U^-,\psi,DU,D\psi,D^2U),
\end{eqnarray}
where $\bar{f}_j, \bar{g}_j$ differ from $f_j, g_j$ by some higher
order terms due to the facts that $\tilde{G}$ differs from
$(\Psi^{-1})^*G$ by the terms involving $D\psi$ or $\psi-r_b$ and
that $\hat{U}$ is small. We also note that
$e_i=e_i((r^1-r_b)y^0+r_b)$ in \eqref{273}.

\begin{remark}\label{rm-2.2}
An important observation is that, by \eqref{107}, if $\varphi_4=0$,
we may write
$$
\varphi_3=G(\varphi_0,u)=(r^1-r_b)\cdot u_b^+((r^1-r_b)y^0+r_b)\cdot
dy^0(\varphi_0)+\text{h.o.t.},
$$
so we may write $L_3((r^1-r_b)\, dy^0(\varphi_0),\varphi_3, 0)$ in
\eqref{273} as an expression of $dy^0(\varphi_0)$ with coefficients
depending only on $y^0$ by adjusting the higher order term
$\bar{f}_9$.
\end{remark}

\subsection{The S-Condition}
We now state the {\it S-condition} assumed in our main theorem.

Consider the following boundary value problem:
\begin{eqnarray}
&&e_1v''+(r^1-r_b)e_2v'+(r^1-r_b)^2(e_3-\lambda_n)v
=-(r^1-r_b)^2e_4,\label{c4}\\
&&v(0)=1,\qquad v(1)=0,\label{c5}\\
&&v'(0)=-\frac{\lambda_n+\mu_7}{\mu_9}(r^1-r_b), \label{c6}
\end{eqnarray}
where $\lambda_n=n(n+1), n=0,1,2,\cdots$, and $e_i=e_i(t(y^0))$ with
$t(y^0)=(r^1-r_b)y^0+r_b$ are considered as functions of $y^0$ on
$[0,1]$.

\begin{definition}
A background solution $U_b$ satisfies the {\it S-Condition} if, for
each $n=0,1,2,\cdots$,  problem \eqref{c4}--\eqref{c6} does not have
a solution.
\end{definition}

The following lemmas show that there exist certain background
solutions which satisfy the S-Condition.

\begin{lemma}\label{lemc01}
For given $\gamma>1$, let $U_b$ be a background solution determined
by the supersonic upstream data $p_b^-(r^0),\rho_b^-(r^0),
t_b^-(r^0)=(M_b^-)^2(r^0)$, and the back pressure $p_b^+(r^1)$,
where $M_b^-(r^0)$ is the Mach number of the upstream supersonic
flow. Then, for given $\rho_b^-(r^0)$ and the back pressure
$p_b^+(r^1)$, when $p_b^-(r^0)$ and $M_b^-(r^0)$ are sufficiently
large, $U_b$ satisfies the S-Condition.
\end{lemma}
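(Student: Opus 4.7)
My plan is to recast \eqref{c4}--\eqref{c6} as an initial value problem and exploit the scaling mismatch created by the third boundary condition. Since $e_1>0$ on $[0,1]$, \eqref{c4} is a regular linear second-order ODE with smooth coefficients; hence, for every $n\ge 0$, there is a unique $v_n\in C^2([0,1])$ with the initial data $v_n(0)=1$ and $v_n'(0)=-\frac{\lambda_n+\mu_7}{\mu_9}(r^1-r_b)$, and the S-Condition for $n$ is equivalent to the single scalar inequality $v_n(1)\ne 0$. The task is then to verify this inequality for every $n$ once $p_b^-(r^0)$ and $M_b^-(r^0)$ are large enough.

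For large $n$, I would carry out a WKB/singular perturbation analysis of \eqref{c4} with small parameter $1/\sqrt{\lambda_n}$. The two fundamental solutions of the homogeneous part behave like $\exp\bigl(\pm(r^1-r_b)\sqrt{\lambda_n}\int_0^{y^0}e_1^{-1/2}\,\dd s\bigr)$, while a particular solution of the inhomogeneous equation is $O(1/\lambda_n)$. Writing $v_n\approx A e^{k y^0}+B e^{-k y^0}$ with $k\sim(r^1-r_b)\sqrt{\lambda_n/e_1(0)}$, the condition $v_n(0)=1$ gives $A+B\approx 1$, whereas the prescribed initial slope $v_n'(0)\sim\frac{\lambda_n(r^1-r_b)}{|\mu_9|}$ (positive and linear in $\lambda_n$, since $\mu_9<0$) forces $A-B\sim\sqrt{\lambda_n\, e_1(0)}/|\mu_9|$. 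Hence both $|A|$ and $|B|$ grow like $\sqrt{\lambda_n}$ with opposite signs, and $v_n(1)\approx A e^{k}$ is exponentially large, in particular nonzero. This settles the S-Condition for all $n\ge N_0$, with $N_0$ depending only on the background.

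For the finitely many remaining $n<N_0$, I would analyze the asymptotics as $p_b^-(r^0),\,M_b^-(r^0)\to\infty$ with $\rho_b^-(r^0)$ and $p_b^+(r^1)$ held fixed. Strong-shock theory applied to the Rankine--Hugoniot relations yields $t_s\to\frac{\gamma-1}{2\gamma}$, and integrating \eqref{b16} together with the back-pressure matching determines a limiting shock position $r_b^\infty\in(r^0,r^1)$ as well as limiting profiles for $e_1,\dots,e_4$ and limiting values of $\mu_7,\mu_9,(r^1-r_b)$. The IVP for $v_n$ then has an explicit limiting solution $v_n^\infty$, and one checks case by case that $v_n^\infty(1)\ne 0$; continuous dependence on the parameters transports the inequality to all sufficiently large $p_b^-(r^0)$ and $M_b^-(r^0)$. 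The real obstacle lies precisely here: one must verify finitely many transcendental inequalities $v_n^\infty(1)\ne 0$ in the strong-shock limit, carefully tracking how the coefficients $e_i,\mu_j$ depend on $t_s$ and $\gamma$ through the explicit formulas given at the end of Section 2 and the ODE \eqref{b16}. The large-$n$ regime is essentially automatic from the order-of-magnitude mismatch, but the small-$n$ verifications are where the actual content of the lemma lies and where the hypothesis of large $p_b^-(r^0)$ and $M_b^-(r^0)$ is genuinely used.
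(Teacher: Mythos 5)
Your reformulation of the S-Condition as ``the IVP solution $v_n$ satisfies $v_n(1)\ne 0$ for every $n$'' matches the paper's starting point, but the proposal then diverges and contains a genuine gap, concentrated exactly where you say the content lies.

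The crucial structural fact you miss is how ``$p_b^-(r^0),M_b^-(r^0)$ large'' is actually used. From the analysis of the background solution in \cite{Yu3}, large upstream pressure and Mach number with fixed back pressure drive the shock position $r_b$ \emph{toward the exit}, so that $\kappa=r^1-r_b\to 0$, and simultaneously $t_s=t_b^+(r_b)\to 0$. Your proposed strong-shock limit with a fixed interior position $r_b^\infty\in(r^0,r^1)$ and nondegenerate limiting coefficients is therefore not the asymptotic regime that the hypotheses create. Once this is understood, the small-$n$ verification is not a finite list of transcendental inequalities to be checked case by case; it is a perturbation off $\kappa=0$. The paper exploits this by transforming \eqref{c4}--\eqref{c6} into a Sturm--Liouville form $w''+\kappa^2\alpha_n w=\kappa^2\beta_n$ with $w(0)=1$, $w'(0)=0$, $w(z_n^*)=0$, multiplying by $w$ and integrating, and concluding $1\le C'''\kappa^2$, a contradiction for small $\kappa$. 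Without recognizing that $\kappa\to0$, and without the accompanying energy estimate, your small-$n$ argument is incomplete: the inequalities $v_n^\infty(1)\ne0$ are neither verified nor even correctly posed, and the logical role of the hypothesis on large $p_b^-,M_b^-$ is not actually invoked.

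For the large-$n$ regime, your WKB heuristic would likely work if made rigorous (controlling the error in the approximation and the sign of $A$ uniformly in $n$), but note that you never need it: in the paper the same smallness $t_s\le\sigma_0$ gives $e_4\le0$ on $[0,1]$, and then for $n\ge3$ one has $e_3-\lambda_n<0$ together with the correct sign of $v'(0)$, so the Hopf maximum principle already excludes $v(1)=0$ directly and uniformly. This is both more elementary and strictly stronger than an asymptotic $v_n(1)\to\infty$ statement, and it relies on the same hypothesis (small $t_s$) that you must in any case invoke to control the sign of the coefficients. In short: your sketch captures the right reformulation, but the decisive ideas of the proof --- $\kappa\to0$, the Sturm--Liouville rescaling $w=e^{-h_n y}v$, and the resulting $O(\kappa^2)$ energy bound --- are missing, and the replacement you propose for the small-$n$ cases rests on an incorrect description of the asymptotic regime.
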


\begin{proof} We divide the proof into three steps.

\medskip
{\it Step 1}. By an analysis of the background solution in
\cite{Yu3}, it suffices to show that, if $\kappa=r^1-r_b>0$ and
$\sigma=t_b^+(r_b)=(M_b^+)^2(r_b)$ are rather small, then $U_b$
satisfies the S-Condition. We will prove by contradiction: we will
first assume that \eqref{c4}--\eqref{c6} has a solution and then
lead to a contradiction.

We note that, once $t_b^-(r^0)$ is given, we can solve $t(x^0)$ for
all $x^0\in[r^0,r^1]$ (i.e., it is independent of $p_b^-(r^0),
\rho_b^-(r^0)$, and $p_b^+(r^1)$). This can be seen from \eqref{b16}
and the Rankine-Hugoniot condition of the Mach number:
\begin{eqnarray*}
\Big(\frac{1}{\sqrt{t_b^-(r_b)}}+\frac{\gamma-1}{2}\Big)
\Big(\frac{1}{\sqrt{t_b^+(r_b)}}+\frac{\gamma-1}{2}\Big)=\frac{(\gamma+1)^2}{4}.
\end{eqnarray*}
Moreover, since $p_b^+(r^1)$ is given,  we have the estimate:
\begin{eqnarray*}
1\le \frac{p_b^+(x^0)}{p_b^+(r_b)}\le C\qquad \text{for any} \quad
x^0\in(r_b,r^1),
\end{eqnarray*}
with $C$ depending only on $p_b^+(r^1)$, $r^0$, and $r^1$.

\medskip
{\it Step 2.}  We choose $\sigma_0$ such that, for $t\le\sigma_0$,
$e_4(t)\le0$. Hence, for any $t_b^+(r_b)\le\sigma_0$ (this
requires $t_b^-(r^0)$ large), by \eqref{b16},
$$
e_4(t(y^0))\le0  \qquad \mbox{for $y^0\in(0,1)$}.
$$
In addition, if $n\ge3,$ we see that $e_3-\lambda_n<0$
and $\frac{(r^1-r_b)(\lambda_n+\mu_7)}{\mu_9}<0$. So, by the Hopf
maximum principle, we infer a contradiction if $v(1)=0$ for a
solution $v$ to \eqref{c4}--\eqref{c6}.

For $n=0,1,2$, we will utilize an energy estimate below to obtain a
contradiction if $\kappa$ is also small.

\medskip
{\it Step 3.} We first reformulate \eqref{c4}--\eqref{c6}. Let
$h_n=-\frac{\mu_7+\lambda_n}{\mu_9}\kappa$. For simplicity, we write
the independent variable $y^0$ as $y$. Then, by multiplying
\begin{eqnarray*}
p_n(y)=\exp\left(\int_0^y\big(2h_n+\kappa\frac{e_2(t(s))}{e_1(t(s))}\big)\,\dd
s\right)
\end{eqnarray*}
to \eqref{c4}, we see that $w=e^{-h_ny}v$ satisfies
\begin{eqnarray*}
p_n(y)\frac{\dd (p_n(y)\frac{\dd w}{\dd y})}{\dd
y}+\kappa^2\alpha_n(y)w(y)=\kappa^2\beta_n(y),
\end{eqnarray*}
where
\begin{eqnarray*}
&&\alpha_n(y)=p_n(y)^2\Big(\frac{(\lambda_n+\mu_7)^2}{\mu_9^2}
 -\frac{\lambda_n+\mu_7}{\mu_9}\,\frac{e_2(t(y))}{e_1(t(y))}
  +\frac{e_3(t(y))-\lambda_n}{e_1(t(y))}\Big),\\
&&\beta_n(y)=-\big(p_n(y)\big)^2e^{-h_ny}\frac{e_4(t(y))}{e_1(t(y))}.
\end{eqnarray*}
By a change of the independent variable $y\rightarrow z_n$:
\begin{eqnarray*}
z_n=\int_0^y\frac{\dd s}{p_n(s)} \qquad \text{with}\quad
z_n^*=\int_0^1\frac{\dd s}{p_n(s)},
\end{eqnarray*}
the above equation becomes
\begin{eqnarray}\label{energy}
w''+\kappa^2\alpha_n w=\kappa^2\beta_n,
\end{eqnarray}
where $'$ is the derivative with respect to $z_n$,
$\alpha_n=\alpha_n(y(z_n))$, and $\beta_n=\beta_n(y(z_n))$. The
boundary conditions are
\begin{eqnarray}\label{c21}
w(0)=1,\qquad w'(0)=0,\qquad w(z_n^*)=0.
\end{eqnarray}

Now, multiplying $w$ to \eqref{energy} and integrating on
$[0,z_n^*]$ yield
\begin{eqnarray}\label{c22}
\int_0^{z_n^*}(w')^2\,\dd z =-\kappa^2\int_0^{z_n^*}\beta_nw\,\dd
z+\kappa^2\int_0^{z_n^*}\alpha_nw^2\,\dd z.
\end{eqnarray}
Note here that, since $n<3$, and $|\beta_n|, |\alpha_n|$, and
$z_n^*$ are bounded by a constant $C$,
\begin{eqnarray*}
&&\Big|\int_0^{z_n^*}\beta_n w(z)\,\dd z\Big|
 \le C\int_0^{z_n^*}\Big|\int_{z_n^*}^{z}w'(s)\,\dd s\Big|\,\dd z
  \le 2C\sqrt{z_n^*}\sqrt{\int_0^{z_n^*}(w')^2\,\dd z},\\
&&\Big|\int_0^{z_n^*}\alpha_n w(z)^2\,\dd z\Big|
 \le C\int_0^{z_n^*}\Big(\int_{z_n^*}^{z}w'(s)\,\dd s\Big)^2\,\dd z
  \le C\frac{z_n^*}{2}\int_0^{z_n^*}(w')^2\,\dd z.
\end{eqnarray*}
By \eqref{c21}, $\int_0^{z_n^*}(w')^2\,\dd z\ne0$. Then, from
\eqref{c22}, we have
\begin{eqnarray*}
\int_0^{z_n^*}(w')^2\,\dd z\le \frac{C'\kappa^2}{1-C'\kappa^2}\le
C''\kappa^2.
\end{eqnarray*}
On the other hand,
\begin{eqnarray*}
1=\Big|\int_0^{z_n^*}w'\,\dd z\Big|^2\le
z_n^*\int_0^{z_n^*}(w')^2\,\dd z\le C'''\kappa^2.
\end{eqnarray*}
This reaches a  contradiction when
$\kappa<\min\{\frac{1}{\sqrt{C'''}}, \frac{1}{\sqrt{2C'}}\}$. Note
that $C'''$ depends only on $r^0,r^1$, $t_b^-(r^0)$, $n$, and
$p_b^+(r^1)$.
\end{proof}

\begin{remark} In Lemma \ref{lemc01}, for the case that
$\kappa=r^1-r_b>0$ is small, we require only $p_b^-(r_0)$ to be
large (see \cite{Yu3}).
\end{remark}

\medskip

In the following, we provide some other results on the existence of
background solutions that satisfy the S-Condition. For given
$[r^0,r^1]$, we note that a background solution $U_b^+$ is
determined by the five parameters $(\gamma, r_b, p_b^+(r_b),
\rho_b^+(r_b), t(r_b))$ with $t(r_b)=(M_b^+)^2(r_b)$, $\gamma>1,\
r_b\in(r^0,r^1), \ p_b^+(r_b)>0, \ \rho_b^+(r_b)>0$, and
$t(r_b)\in(0,1)$.

\begin{lemma}
For given $\gamma>1, \ \rho_b^+(r_b)>0$, and $\sigma_0\in(0,1)$,
there exist a set $S_1\subset[0,\sigma_0]$ of at most countable
infinite points and a set $S_2\in[r^0,r^1]$ of at most finite points
such that the background solution determined by $\gamma>1,\
r_b\in(r^0,r^1)\setminus S_2, \ p_b^+(r_b)>0,\ \rho_b^+(r_b)>0$, and
$t(r_b)\in(0,\sigma_0)\setminus S_1$ satisfies the S-Condition.
\end{lemma}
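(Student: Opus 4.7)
The plan is to combine the Hopf maximum principle (to dispatch sufficiently high modes $n \ge n_0$ at once) with real-analytic dependence of the IVP solution on the parameters (to control the remaining finitely many modes).

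\textbf{Dispatching high modes.} Following Step 2 of the proof of Lemma~\ref{lemc01}, choose $\sigma_0 \in (0,1)$ small enough that $e_4(t) \le 0$ for $t \in [0, \sigma_0]$. For $t_s \in (0, \sigma_0)$, the monotonicity of $t(x^0)$ along the background (cf.\ \eqref{b16}) gives $e_4(t(y^0)) \le 0$ throughout $[0,1]$. One verifies that, for $n$ sufficiently large (a threshold $n_0$ depending on the other parameters through $\mu_7$), both $e_3 - \lambda_n < 0$ on $[0,1]$ and $-(\lambda_n + \mu_7)\kappa/\mu_9 < 0$. The Hopf maximum principle argument used in Step 2 of Lemma~\ref{lemc01} then precludes solutions of \eqref{c4}--\eqref{c6} for all $n \ge n_0$. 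Hence only finitely many modes can violate the S-Condition.

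\textbf{IVP reformulation, analyticity, and non-triviality.} For each $n < n_0$, let $v_n(y^0; r_b, t_s)$ be the unique solution of \eqref{c4} on $[0,1]$ with $v_n(0)=1$ and $v_n'(0)=-(\lambda_n+\mu_7)\kappa/\mu_9$; then \eqref{c4}--\eqref{c6} admits a solution iff $F_n(r_b, t_s) := v_n(1) = 0$. Since $t(\kappa y^0 + r_b)$ is the value at $\kappa y^0 + r_b$ of the solution of the autonomous, real-analytic ODE \eqref{b16} with initial datum $t(r_b)=t_s$, it depends real-analytically on $(r_b, t_s, y^0)$, and the remaining coefficients $e_i, \mu_7, \mu_9$ are rational expressions in this data (the factor $\rho^\gamma$ in $e_4$ being reduced via the Bernoulli invariant and the state equation). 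By the classical theorem on analytic dependence of ODE solutions on initial data and parameters, $F_n$ is real-analytic on $(r^0, r^1) \times (0, \sigma_0)$. Lemma~\ref{lemc01} asserts that on a non-empty open set of parameters (those with $\kappa = r^1 - r_b$ and $t_s$ both sufficiently small) the S-Condition holds, so each $F_n$ is not identically zero.

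\textbf{Extracting $S_2$ and $S_1$.} For each $n < n_0$, define $S_2^{(n)} := \{r_b \in (r^0, r^1) : F_n(r_b, \cdot) \equiv 0 \text{ on } (0, \sigma_0)\}$. Since $F_n \not\equiv 0$, at least one Taylor coefficient of $F_n$ in $t_s$ is a non-trivial real-analytic function of $r_b$, whose zero set is discrete; coupling this with analytic extendability of $F_n$ to a neighborhood of $[r^0, r^1]$ and compactness, $S_2^{(n)}$ is finite. Set $S_2 := \bigcup_{n < n_0} S_2^{(n)}$, a finite subset of $[r^0, r^1]$. For each $r_b \notin S_2$, every $F_n(r_b, \cdot)$ is a non-trivial real-analytic function of $t_s$ on $(0, \sigma_0)$, hence has a discrete, at most countably infinite zero set; take $S_1 := \bigcup_{n < n_0}\{t_s \in (0, \sigma_0) : F_n(r_b, t_s) = 0\}$. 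Then for $(r_b, t_s) \in ((r^0, r^1)\setminus S_2)\times((0,\sigma_0)\setminus S_1)$, no $F_n$ vanishes and the S-Condition holds.

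\textbf{Main obstacle.} The principal subtlety is the two-parameter structure: the zero locus of an analytic $F_n$ is a real-analytic curve in the $(r_b, t_s)$-plane whose projections onto the two axes are generally not discrete, so extracting a Cartesian-type exclusion requires the Taylor-coefficient dichotomy above (vertical components $\{r_b = \text{const}\}$ contribute to $S_2$, and for all other slices the $t_s$-zero set is discrete). A secondary technical issue is that the Hopf threshold $n_0$ depends on $|\mu_7|$, and hence on $p_b^+(r_b)/\rho_b^+(r_b)$, so only finitely many modes (but a number that grows with these data) need be treated by the analytic argument for each fixed pair.
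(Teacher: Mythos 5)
Your plan mirrors the paper's (use the Hopf maximum principle to kill high modes, then analyticity in the parameters for the finitely many remaining modes), but the high-mode step has a genuine gap that the paper's proof is specifically built to avoid. You open by ``choos[ing] $\sigma_0\in(0,1)$ small enough that $e_4(t)\le 0$ on $[0,\sigma_0]$'' so that, for \emph{every} $t_s\in(0,\sigma_0)$, the right-hand side $-\kappa^2 e_4$ of \eqref{c4} stays nonnegative along the trajectory and the Hopf argument dispatches all $n\ge n_0$ at once. But in this lemma $\sigma_0$ is \emph{given}, not chosen; the quadratic factor $(2\gamma-3)t^2+8t-3$ in $e_4$ changes sign at some $t^*(\gamma)<1$, so for a given $\sigma_0>t^*(\gamma)$ and $t_s$ near $\sigma_0$ the sign hypothesis fails, the maximum-principle argument breaks down, and you would be forced to run the analyticity dichotomy over infinitely many $n$ --- yielding a countable $S_2$, not the finite set the lemma requires.

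The paper avoids this by applying the maximum principle only at the single boundary parameter $\sigma=t(r_b)=0$: there $t(x^0)\equiv 0$ along the whole trajectory, so $e_3(0)=12$ and $e_4(0)<0$ hold automatically (independently of $\sigma_0$), the threshold is the explicit $n_0=3$, and \eqref{c4}--\eqref{c6} reduce to the concrete problem \eqref{c8}--\eqref{c11}. Analyticity of $f_n(\sigma)=v(1,\sigma)$ on the compact interval $[0,\sigma_0]$ then upgrades the single-point fact $f_n(0)\ne0$ (for $n\ge3$) to finiteness of $f_n^{-1}(\{0\})$ for arbitrary $\sigma_0$; and for $n=0,1,2$ the paper again works at $\sigma=0$, using analyticity of $g_n(r_b)=v(1;r_b,0)$ and reducing nontriviality to the fully degenerate case $r_b=r^1$ where \eqref{c12} is manifestly unsolvable. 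The missing idea in your write-up is precisely this: apply Hopf \emph{only at the boundary parameter} $\sigma=0$ (where the hypotheses are free) and let one-variable analytic continuation carry the conclusion to all $\sigma\in(0,\sigma_0)$. Your two-variable Taylor-coefficient dichotomy for extracting $S_2$, and your appeal to Lemma~\ref{lemc01} for nontriviality of $F_n$, are both correct and somewhat softer than the paper's explicit degenerate-case computations --- but they only come into play after the high-mode step, which as written does not close.
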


\begin{proof}
Let $\sigma=t(r_b)\in[0,\sigma_0]$. Then, by \eqref{b16}, we know
that $t=t(x^0,\sigma)$ is analytical with respect to $\sigma$. Also,
by the theory of ordinary differential equations, $v=v(y^0,\sigma)$
is an analytical function of $\sigma$. Hence, we establish an
analytical mapping $f_n: [0,\sigma_0]\rightarrow \mathbb{R}$ by
\begin{eqnarray}
f_n(\sigma)=v(1,\sigma),\qquad n=0,1,2,\cdots
\end{eqnarray}

Now, if $f_n^{-1}(\{0\})$ has infinite points, then the zeros of
$f_n$ have an accumulate point in $[0,\sigma_0]$, which implies
$f_n\equiv 0$, especially, $f_n(0)=0$.

Note that $t(x^0,0)\equiv 0$. Thus, in this case,
\eqref{c4}--\eqref{c6} become
\begin{eqnarray}
&&((r^1-r_b)y^0+r_b)^2v''+8(r^1-r_b)\big((r^1-r_b)y^0+r_b\big)v'
  +(r^1-r_b)^2(12-\lambda_n)v\nonumber\\
&&\qquad=12(r^1-r_b)^2\frac{\rho_b^+(x^0)^\gamma}{\rho_b^+(r_b)^\gamma}\ge0,\label{c8}\\
&&v(0)=1,  \label{c9}\\
&&v'(0)=\frac{(r^1-r_b)\lambda_n}{2r_b}\ge0,\label{c10}\\
&& v(1)=0.\label{c11}
\end{eqnarray}

For $n\ge 3$, $(r^1-r_b)^2(12-\lambda_n)\le0$. By the Hopf maximum
principle, we infer a contradiction at $y^0=0$. Therefore, in these
cases, $f_n^{-1}(\{0\})$ consists of finite points, which implies
that $\cup_{n=3}^\infty f_n^{-1}(\{0\})$ is countable.

For $n=0,1,2$, we also recognize that the solution $v$ of problem
\eqref{c8}--\eqref{c10} is an analytical function of the parameter
$r_b$. Hence, we have an analytical mapping $g_n:
[r^0,r^1]\rightarrow \mathbb{R}$ defined by $g_n(r_b)=v(1,r_b)$. We
claim that $\cup_{n=0}^2g_n^{-1}(\{0\})$ has only finite points.
Indeed, if $g_n^{-1}(\{0\})$ contains infinite points, then
$g_n\equiv0$, especially $g_n(r^1)=0$. However, in this case,
\eqref{c8}--\eqref{c11} are reduced to
\begin{equation}\label{c12}
\left\{\begin{array}{ll}
v''=0,\\
v(0)=1,\quad v(1)=0, \quad v'(0)=0.
\end{array}\right.
\end{equation}
Obviously, there is no solution to this problem.
\end{proof}

\begin{lemma}
For given $\gamma>1, \rho_b^+(r_b)>0$, and $t(r_b)\in(0,1)$, there
exists a set $S_3\subset[r^0,r^1]$ of at most countable infinite
numbers of points such that the background solution determined by
$\gamma,\ r_b\in(r^0,r^1)\setminus S_3,\ p_b^+(r_b)>0,
\rho_b^+(r_b)$, and $t(r_b)$ satisfies the S-Condition.
\end{lemma}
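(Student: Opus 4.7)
The plan is to adapt the strategy of the preceding lemma, this time taking $r_b$ as the varying parameter (with $\gamma$, $\rho_b^+(r_b)$, and $t(r_b)$ held fixed). Since the coefficients $e_i$ in \eqref{c4} depend on $y^0$ only through the Mach-squared profile $t((r^1-r_b)y^0+r_b)$, and since $t(x^0;r_b)$ is obtained by solving the ODE \eqref{b16} with the prescribed value $t(r_b)\in(0,1)$ at $x^0=r_b$, the analytic-dependence theorem for ODEs with analytic right-hand sides gives that $t(x^0;r_b)$ depends real-analytically on $r_b\in(r^0,r^1)$. Consequently each $e_i$, together with the initial data in \eqref{c6} (whose $r_b$-dependence enters only through $\kappa:=r^1-r_b$), is real-analytic in $r_b$.

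For each fixed $n\geq 0$ I would let $v_n(\cdot;r_b)$ denote the unique solution of the initial value problem consisting of \eqref{c4} together with $v(0)=1$ and $v'(0)=-\frac{\lambda_n+\mu_7}{\mu_9}(r^1-r_b)$. Because $e_1=(x^0)^2(1-t)>0$ throughout the subsonic region behind the background shock, the ODE is non-degenerate on $[0,1]$, so $v_n(\cdot;r_b)$ exists on the whole interval and depends real-analytically on $r_b$. Hence
\[
h_n(r_b):=v_n(1;r_b)
\]
is a real-analytic function of $r_b\in(r^0,r^1)$, and the S-Condition fails at the background solution parametrized by $r_b$ if and only if $h_n(r_b)=0$ for some $n$.

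The key remaining step is to show that no $h_n$ vanishes identically. I would do this by examining the limit $r_b\to r^1$, i.e.\ $\kappa\to 0^+$. Dividing \eqref{c4} by $e_1>0$, the equation takes the form $v''+\kappa A_n(y^0)v'+\kappa^2 B_n(y^0)v=\kappa^2 C(y^0)$ with coefficients bounded uniformly in $r_b$, while the initial data degenerates to $v(0)=1$, $v'(0)\to 0$. A standard continuous-dependence argument then yields $v_n(y^0;r_b)\to 1$ uniformly on $[0,1]$, so $h_n(r_b)\to 1\neq 0$. This rules out $h_n\equiv 0$.

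Since a nonzero real-analytic function on the interval $(r^0,r^1)$ has a discrete zero set, each $h_n^{-1}(\{0\})$ is at most countable, and hence
\[
S_3:=\bigcup_{n=0}^{\infty}h_n^{-1}(\{0\})
\]
is also at most countable, which gives the desired exceptional set. The main obstacle is the rigorous verification of the analytic dependence on $r_b$ — both of $t(x^0;r_b)$ through the flow of \eqref{b16} and of $v_n(y^0;r_b)$ through the flow of \eqref{c4} — but this follows from a routine application of the analytic-dependence theorem for ODEs, once one observes that for $r_b\in(r^0,r^1)$ and $t(r_b)\in(0,1)$ the relevant parameter values stay away from the singularities of the respective right-hand sides.
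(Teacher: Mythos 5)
Your proposal is correct and follows essentially the same route as the paper: for each $n$ one views $v(1)$ as a real-analytic function $h_n$ of $r_b$, observes that at the degenerate value $r_b=r^1$ (equivalently $\kappa\to 0^+$) the initial-value problem collapses to $v''=0$, $v(0)=1$, $v'(0)=0$ with solution $v\equiv 1$ (which is the content of the paper's \eqref{c12}), so $h_n$ cannot vanish identically, and then uses discreteness of the zero set of a nontrivial analytic function. You spell out the analytic-dependence step and take a limit rather than evaluating directly at $r_b=r^1$, but these are the same argument.
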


\begin{proof}
As in the above proof, consider $v(1)$ as an analytical function of
$r_b$ for each $n$. If the pre-image of the zero has infinite points
for some $n$, then the function is identically zero and, by
\eqref{c12}, there is a contradiction.
\end{proof}

\begin{lemma}
There exists $\sigma_0\in(0,1)$ such that, for given $\gamma>1,
p_b^+(r_b)>0, \rho_b^+(r_b)>0$, and
$t(r_b)=(M_b^+)^2(r_b)\in(0,\sigma_0)$, there exists
$r_*\in(r^0,r^1)$ so that the background solution, determined by
$\gamma>1, r_b\in(r_*,r^1), p_b^+(r_b), \rho_b^+(r_b)$, and
$t(r_b)$, satisfies the S-Condition.
\end{lemma}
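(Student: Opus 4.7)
The plan is to mirror the strategy of Lemma \ref{lemc01}, trading the earlier largeness of upstream data for smallness of $\kappa := r^1 - r_b$: since $\gamma$, $p_b^+(r_b)$, $\rho_b^+(r_b)$, and $\sigma := t(r_b)$ are prescribed, the only remaining freedom is to push $r_b$ toward $r^1$. I would first fix $\sigma_0 \in (0, 1)$ small enough that, for every $\sigma \in (0, \sigma_0)$ and every $n \geq 3$: (a) $e_4(t) \leq 0$ on $[0, \sigma_0]$, and (b) $\lambda_n + \mu_7 > 0$. Property (a) is a direct inspection of $e_4$, and combined with the monotonicity of $t(x^0)$ from \eqref{b16} it forces $e_4(t(y^0)) \leq 0$ on $[0,1]$. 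Property (b) follows because $\mu_0 = O(\sigma)$ and $\mu_6 = O(\sqrt{\sigma})$ by direct inspection of their formulas, so $\mu_7 = -\mu_0\mu_6 \to 0$ as $\sigma \to 0$, while $\lambda_n \geq 12$.

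Next, I would dispose of the high modes $n \geq 3$ by the Hopf maximum principle, exactly as in Step 2 of the proof of Lemma \ref{lemc01}, and without any smallness assumption on $\kappa$. Writing \eqref{c4} as $Lv = -\kappa^2 e_4 \geq 0$, the operator $L$ has non-positive zero-order coefficient $\kappa^2(e_3 - \lambda_n) \leq 0$ throughout $[0,1]$ (from the expansion $e_3(t) = 12 - 2t + O(t^2)$ near $t = 0$, after further shrinking $\sigma_0$ if needed). The boundary values $v(0) = 1 > 0 = v(1)$ pin the maximum of $v$ at $y = 0$, and the Hopf boundary lemma then demands $v'(0) < 0$; this contradicts \eqref{c6}, which prescribes $v'(0) = -\kappa(\lambda_n + \mu_7)/\mu_9 > 0$ since $\mu_9 < 0$.

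For the low modes $n = 0, 1, 2$, I would invoke the energy argument of Step 3 of Lemma \ref{lemc01} verbatim. The substitution $w = e^{-h_n y} v$ followed by the change of variable $z_n = \int_0^y \mathrm{d} s / p_n(s)$ reduces \eqref{c4}--\eqref{c6} to \eqref{energy}--\eqref{c21}; multiplying by $w$, integrating over $[0, z_n^*]$, and applying Cauchy--Schwarz with the boundary conditions yields $1 \leq C''' \kappa^2$, which is impossible for $\kappa$ small. Choosing $r_* \in (r^0, r^1)$ so that $r^1 - r_* < \min\{1/\sqrt{C'''},\, 1/\sqrt{2C'}\}$ uniformly over $n \in \{0, 1, 2\}$ then yields the lemma.

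The main obstacle I foresee is establishing uniformity of the constants $C'$, $C'''$, $z_n^*$, $\alpha_n$, and $\beta_n$ in the parameters $(r_b, \sigma) \in (r_*, r^1) \times (0, \sigma_0)$, so that a single $r_*$ works for all admissible data. This reduces to uniform continuous dependence of the background on $(r_b, \sigma)$ over the shrinking interval $[r_b, r^1]$, which is standard for the smooth ODE system \eqref{1041}--\eqref{1042}; all relevant coefficients remain in a compact range as $r_b \to r^1$ and $\sigma \to 0$, so the uniformity is routine to verify.
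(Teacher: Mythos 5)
Your treatment of the high modes $n \ge 3$ via the Hopf maximum principle coincides with the paper's. For $n = 0, 1, 2$, however, you take a genuinely different route. The paper regards $v(1)$ as an analytic function $g_n(r_b)$ of $r_b \in [r^0, r^1]$: at the endpoint $r_b = r^1$ (so $\kappa = 0$) problem \eqref{c4}--\eqref{c6} degenerates to $v'' = 0$, $v(0) = 1$, $v'(0) = 0$, $v(1) = 0$, which has no solution, hence $g_n \not\equiv 0$, the zero set $K = \bigcup_{n=0}^2 g_n^{-1}(\{0\})$ is finite, and one takes $r_* = \sup K < r^1$ (or $r_*=r^0$ if $K=\emptyset$). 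This is purely qualitative, and the paper explicitly remarks afterward that it yields no estimate on $r^1 - r_*$. Your alternative, re-running the energy estimate of Step~3 of Lemma~\ref{lemc01}, is quantitative and delivers exactly such a bound. It is a legitimate substitute: with $\sigma$ and the downstream data at the shock fixed, $|\alpha_n|$, $|\beta_n|$, $z_n^*$, and hence $C'$, $C'''$, stay uniformly bounded as $r_b$ ranges over $[r^0, r^1]$, since $h_n = O(\kappa)$ forces $p_n \to 1$ and $z_n^* \to 1$ as $\kappa \to 0$, and the background state on $[r_b, r^1]$ lies in a fixed compact range by the ODEs \eqref{1041}--\eqref{1042-a}; thus the uniformity you flagged is, as you say, routine (and in fact you only need it in $r_b$, not in $\sigma$, since $\sigma = t(r_b)$ is fixed in the statement). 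Both arguments close the gap; the advantage yours buys is the explicit lower bound $r^1 - r_* \ge \min\{1/\sqrt{C'''}, 1/\sqrt{2C'}\}$, which the paper's proof does not provide.
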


\begin{proof}
We choose $\sigma_0$ such that, for $t\le \sigma_0$,
$e_4(t)\le0$. Hence, for any $t(r_b)\le\sigma_0$,
$$
e_4(t(y^0))\le 0 \qquad \mbox{for $y^0\in(0,1)$}
$$
by \eqref{b16}. In addition, if
$n\ge 3$, we see that
$$
e_3-\lambda_n<0, \qquad \frac{(r^1-r_b)(\lambda_n+\mu_7)}{\mu_9}<0.
$$
Thus, by the Hopf
maximum principle, we infer a contradiction if $v(1)=0.$

Now, for $n=0,1,2$, consider $K=\cup_{n=0}^2g_n^{-1}(\{0\})$ which
have at most finite points. Note that $r^1\notin K$. Thus, let
$r_*=\sup K <r^1$ (if $K=\emptyset$, let $r_*=r^0$). Then the lemma
is proved.
\end{proof}

This lemma improves somewhat the results of Lemma \ref{lemc01}, but
we do not have an estimate of $r^1-r_*$ here as that in Lemma
\ref{lemc01}.

\section{An Iteration Mapping and Decomposition of the Euler System}

In this section we set up an iteration mapping and show that it has
a unique fixed point. By the derivations in \S 2, it is easy to see
that any transonic shock solution to \eqref{101}--\eqref{103}
satisfying those requirements in Theorem \ref{thm101} must be a
fixed point of the iteration mapping, which implies its
uniqueness claimed in Theorem \ref{thm101}.
Another motivation to introduce the iteration mapping is for constructing
approximate solutions to show the existence of global transonic shock solutions, which
require further exploration.

\subsection{The Iteration Set}

For given $\psi\in \mathcal{K}_\sigma$, its position $r^p$ and
profile $\psi^p$ satisfy
\begin{eqnarray*}
\norm{\psi^p}_{C^{4,\alpha}(\mathbf{S}^2)}\le 2\sigma,\quad
|{r^p-r_b}|\le\sigma.
\end{eqnarray*}
We solve the candidate subsonic flow in
$\mathcal{M}_\psi^+:=\{(x^0,x)\in\mathcal{M}:x^0\ge\psi(x),
x\in\mathbf{S}^2\}$. By \eqref{259}, we write the set of possible
variations of the subsonic flows as
\begin{eqnarray}
O_\delta&:=&\Big\{\check{U}=(\check{p},\check{\rho},\check{u})\,:\,
\norm{(\check{p},\check{\rho})}_{C^{3,\alpha}(\Omega)}
+\norm{\bar{\check{u}}}_{C^{3,\alpha}_1(\Omega)}\le\delta\le\delta_0\Big\}
\end{eqnarray}
with constant $\delta_0$ to be chosen later. Given $U^-$ satisfying
\eqref{015}, for any $\psi\in \mathcal{K}_\sigma$ and $\check{U}\in
O_\delta$, we construct a mapping $\mathcal{K}_\sigma\times
O_\delta\rightarrow \mathcal{K}_\sigma\times O_\delta$ denoted as
$$
\mathcal{T}(\psi,\check{U})=(\hat{\psi},\hat{U})
$$
by the following
iteration process. Then
we show that $\mathcal{T}$ has a unique fixed point in
$\mathcal{K}_\sigma\times O_\delta.$

\subsection{A Nonlocal Venttsel Problem for the Candidate Pressure in
$\Omega$}

We first choose $\varepsilon_0$, $\sigma_0$, and $\delta_0$ small
enough such that the formulations in \S 2 valid. For any $\psi\in
\mathcal{K}_\sigma,\ \check{U}\in O_\delta$, and $U^-$ satisfying
\eqref{015}, we may express the higher order terms $\bar{f}_i$ and
$\bar{g}_i$ in terms of $U=U_b^++\check{U}$.

By \eqref{th103} and \eqref{272}--\eqref{273}, we solve $\hat{p}$
from the following linear nonlocal Venttsel problem:
\begin{eqnarray}
&&e_1\p_{0}^2\hat{p}-(r^1-r_b)^2\Delta'\hat{p}+(r^1-r_b)e_2\p_{0}
\hat{p}+(r^1-r_b)^2e_3\hat{p}+(r^1-r_b)^2e_4\,\hat{p}|_{\Omega^0}
\nonumber\\
&&\qquad =\bar{f}_{9}(U,U^-,\psi,DU,D\psi,D^2U)\qquad\,\,
\qquad\qquad
 \text{in}\ \ \Omega,\label{175}\\
&&\hat{p}=0 \quad\qquad
\qquad\qquad\qquad\qquad\qquad\qquad\qquad\qquad
 \text{on}\ \ \Omega^1,\label{176}\\
&&\Delta'(\hat{p}|_{\Omega^0})+\frac{\mu_9}{r^1-r_b}
\p_{{0}}\hat{p}|_{\Omega^0}
+\mu_7\,
{\hat{p}}|_{\Omega^0}=\bar{g}_8(U,U^-,\psi,DU,D\psi,D^2U,D^2\psi,D^3\psi)
  \quad\text{on}\ \Omega^0.\qquad\label{309}
\end{eqnarray}

Thanks to Theorem 1.5 in \cite{LN} for the Venttsel problem (note
that $\frac{\mu_9}{r^1-r_b}<0$) and Theorem 6.6 in \cite{GT} for the
Dirichlet problem, with the aid of a standard higher regularity
argument as in Theorem 6.19 of \cite{GT}, by considering
$(r^1-r_b)^2e_4\, \hat{p}|_{\Omega^0}$ as a nonhomogeneous term and
using interpolation inequalities, for the solution $\hat{p}\in
C^{3,\alpha}(\Omega)$, we have the apriori Schauder estimate
\begin{eqnarray}\label{310}
\norm{\hat{p}}_{C^{3,\alpha}(\Omega)}\le
C_2\Big(\norm{\hat{p}}_{C^0(\Omega)}+\norm{\bar{f}_{9}}_{C^{1,\alpha}(\Omega)}
+\norm{\bar{g}_8}_{C^{1,\alpha}(\Omega^0)}\Big)
\end{eqnarray}
with constant $C_2$ depending only on $U_b$.

Next, by Lemma \ref{lema5a},
let $u_{n,m}(y)$ be the eigenfunctions
of $\Delta'$ on $\mathbf{S}^2$ with respect to the eigenvalues
$\lambda_n=n(n+1)\ge 0, n=0,1,2,\cdots$.
Then
$$
\hat{p}=\sum_{n=0}^\infty\sum_{m=1}^{2n+1} v_{n,m}(y^0)u_{n,m}(y).
$$
For $\bar{f}_{9}=0$ and $\bar{g}_8=0$, each $v_{n,m}$ satisfies the
nonlocal differential equation:
\begin{eqnarray}
&&e_1v_{n,m}''+(r^1-r_b)e_2v_{n,m}'+(r^1-r_b)^2(e_3-\lambda_n)v_{n,m}
+(r^1-r_b)^2e_4v_{n,m}(0)=0,\label{c-1}\\
&&v_{n,m}(1)=0,\label{c-2}\\
&&\mu_9v_{n,m}'(0)+(r^1-r_b)(\lambda_n+\mu_7)v_{n,m}(0)=0.\label{c-3}
\end{eqnarray}
 
First, if $v_{n,m}(0)=0,$ \eqref{c-3} says that $v'_{n,m}(0)=0$.
Hence, by uniqueness of solutions of the Cauchy problem of
differential equations, we infer $v_{n,m}\equiv 0$.

Now, if $v_{n,m}(0)\ne0$, then, by considering
$\frac{v_{n,m}(y^0)}{v_{n,m}(0)}$ as the unknown, we see that it
solves
\begin{eqnarray}
&&e_1v''_{n,m}+(r^1-r_b)e_2v'_{n,m}+(r^1-r_b)^2(e_3-\lambda_n)v_{n,m}
=-(r^1-r_b)^2e_4,\label{c-4}\\
&&v_{n,m}(0)=1,\qquad v_{n,m}(1)=0,\label{c-5}\\
&&v'_{n,m}(0)=-\frac{\lambda_n+\mu_7}{\mu_9}(r^1-r_b).\label{c-6}
\end{eqnarray}
The $S$-condition in \S 2.6  guarantees the nonexistence of a solution
to this problem. Thus, $v_{n,m}(0)=0$, which implies $\hat{p}\equiv 0$.

Therefore, the S-Condition implies the uniqueness of solutions of
the Venttsel problem \eqref{175}--\eqref{309}. Then, by \eqref{310}
and a standard argument of contradiction based on the compactness
(cf. Lemma 9.17 in \cite{GT}), we have the apriori estimate:
\begin{eqnarray}\label{314}
\norm{\hat{p}}_{C^{3,\alpha}(\Omega)}\le
C_2\Big(\norm{\bar{f}_{9}}_{C^{1,\alpha}(\Omega)}
+\norm{\bar{g}_8}_{C^{1,\alpha}(\Omega^0)}\Big)
\end{eqnarray}
for any $C^{3,\alpha}$ solution of the above nonlocal Venttsel
problem. Then, by the method of continuity as carried out in
\cite{LN}, we see that this problem has a unique solution
$\hat{p}\in C^{3,\alpha}(\Omega)$ satisfying estimate
\eqref{314}.

\subsection{Update of the Candidate Free Boundary}

Once we get $\hat{p}$, according to \eqref{270}--\eqref{271}, we may
obtain the new profile of the free boundary $\hat{\psi}^p$ and the
position $\hat{r}^p$ by
\begin{eqnarray}
&&\hat{\psi}^p=\frac{1}{\mu_2}\Big(\hat{p}|_{\Omega^0}-\frac{\mu_8}{r^1-r_b}
\int_{\mathbf{S}^2}\p_{{0}}\hat{p}|_{\Omega^0}\,\vol
+{\bar{g}_7(U,U^-,\psi,D\psi)}\Big),\label{315}\\
&&\hat{r}^p-r_b=-\frac{1}{4\pi\mu_6}\int_{\mathbf{S}^2}
\Big(\frac{\mu_5}{r^1-r_b}\, \p_{{0}}\hat{p}|_{\Omega^0}
+\bar{g}_5(U,U^-,\psi,DU,D\psi)\Big)\, \vol.
\end{eqnarray}
In fact, one may show that $\int_{\mathbf{S}^2}\hat{\psi}^p\,\vol=0$
by \eqref{315} and \eqref{309}; However, we do not need this in the
process. We need to improve the regularity of $\hat{\psi}$. By
\eqref{309} and \eqref{315}, $\hat{\psi}^p$ satisfies this elliptic
equation on $\mathbf{S}^2:$
\begin{eqnarray}\label{317}
\Delta'\hat{\psi}^p+\mu_7\hat{\psi}^p=\mu_0\mu_6(\hat{r}^p-r_b)
+\frac{\mu_0\mu_5}{r^1-r_b}\,\p_0\hat{p}|_{\Omega^0}
+\bar{g}_6(U,U^-,\psi,DU,D\psi,D^2\psi).
\end{eqnarray}
The right-hand side belongs to $C^{2,\alpha}(\mathbf{S}^2)$. Thus,
by Theorem 6.19 in \cite{GT}, $\hat{\psi}=\hat{\psi}^p+\hat{r}^p$
obviously obeys the estimate:
\begin{eqnarray}\label{318a}
\|\hat{\psi}-r_b\|_{C^{4,\alpha}(\mathbf{S}^2)}&\le&
C_2\Big(\norm{(\bar{g}_5,\bar{g}_7)}_{C^0(\Omega^0)}
+\norm{\bar{f}_{9}}_{C^{1,\alpha}(\Omega)}+\norm{\bar{g}_8}_{C^{1,\alpha}(\Omega^0)}
+\norm{\bar{g}_6}_{C^{2,\alpha}(\Omega^0)}\Big)
\end{eqnarray}
with the aid of \eqref{314}.

\subsection{Solving the Candidate Velocity on $\Omega^0$}
Now we need to solve $\hat{u}$ on $\Omega^0.$ To this end, by
\eqref{263} and \eqref{268}, we reformulate this problem as
\begin{eqnarray}\label{318}
&&d(\hat{\bar{u}}^1|_{\Omega^0})
=\chi(U,U^-,\psi):=-\frac{d\bar{g}_0(U,U^-,\psi,D\psi)}{\mu_0}\in
C^{2,\alpha}_2(\mathbf{S}^2), \\
&&d^*(\hat{\bar{u}}^1|_{\Omega^0})=\frac{\mu_5}{r^1-r_b}\p_{{0}}\hat{p}|_{\Omega^0}
+\mu_6(\hat{\psi}-r_b)+\bar{g}_5(U,U^-,\psi,DU,D\psi)\in
C^{2,\alpha}(\mathbf{S}^2),\label{319}
\end{eqnarray}
where $d$ and $d^*$ are respectively the exterior differential and
codifferential operator of forms on $\mathbf{S}^2$. By integrating
\eqref{309} on $\mathbf{S}^2$, we see that the integration of the
right-hand side of \eqref{319} on $\mathbf{S}^2$ also vanishes.
Then, by Lemma \ref{lema3}, there exists a unique solution
$\hat{\bar{u}}^1|_{\Omega^0}\in C_1^{3,\alpha}(\mathbf{S}^2)$ with
the estimate:
\begin{eqnarray}
\norm{\hat{\bar{u}}^1|_{\Omega^0}}_{C^{3,\alpha}_1(\mathbf{S}^2)}\le
C_2\Big(\norm{\bar{g}_0}_{C^{3,\alpha}_1(\Omega^0)}
+\norm{(\bar{g}_5, \bar{g}_6, \bar{g}_7)}_{C^{2,\alpha}(\Omega^0)}
+\norm{\bar{f}_{9}}_{C^{1,\alpha}(\Omega)}
+\norm{\bar{g}_8}_{C^{1,\alpha}(\Omega^0)}\Big).
\end{eqnarray}
Therefore, combining this with \eqref{264}, the velocity on the
candidate free boundary is obtained.

\subsection{Solving the Candidate Entropy, Density, and Vorticity in
$\Omega$} We note that the entropy can be solved according to
\eqref{107} and \eqref{267} by the following Cauchy problem of the
linear transport equations:
\begin{eqnarray}
&D_u\widehat{A(S)}=0 &\text{in}\ \ \Omega,\label{en}\\
&\widehat{A(S)}=\mu_4(\hat{\psi}-r_b)+\bar{g}_4(U,U^-,\psi,D\psi)
&\text{on}\ \ \Omega^0.\label{en1}
\end{eqnarray}
Note that $A(S)_b^+(x^0)=A(S)_b^+(r_b)$ is a constant. By the theory
of ordinary differential equations, since $u$ is close to
$(u^0)_b^+\p_0$ in $C^{3,\alpha}(\Omega)$, the trajectories of $u$
still fill $\Omega$. We also have the estimate:
\begin{eqnarray*}
\|\widehat{A(S)}\|_{C^{3,\alpha}(\Omega)}\le
C_2\Big(\|\hat{\psi}-r_b\|_{C^{3,\alpha}(\mathbf{S}^2)}
+\norm{\bar{g}_4}_{C^{3,\alpha}(\Omega^0)}\Big).
\end{eqnarray*}
Hence, we may solve the candidate density $\hat{\rho}+\rho_b^+$ by
the state function $p=A(S)\rho^\gamma$. Then
\begin{eqnarray*}
\norm{\hat{\rho}}_{C^{3,\alpha}(\Omega)}\le
C_2\Big(\norm{\hat{p}}_{C^{3,\alpha}(\Omega)}
+\norm{\widehat{A(S)}}_{C^{3,\alpha}(\Omega)}\Big).
\end{eqnarray*}

For $\widehat{d\bar{u}}=d\bar{u}=d\bar{u}-d\bar{u}_b^+$, by
subtracting the background solution from \eqref{183}, we formulate a
linear transport equation:
\begin{equation}\label{326}
\mathcal{L}_{u}\widehat{d\bar{u}}=-d(\frac{1}{\rho_b^+})\wedge
d\hat{p}+\frac{d\hat{\rho}\wedge d
p_b^+}{(\rho_b^+)^2}+\bar{f}_{10}(U,DU).
\end{equation}
According to \eqref{261}, the initial value on $\Omega^0$ can be
taken as
\begin{eqnarray}\label{327}
\widehat{d\bar{u}}|_{\Omega^0}&=&d(\hat{\bar{u}}|_{\Omega^0})
+E_1(U,\psi,DU,D\psi,D^2\psi)|_{\Omega^0}\wedge
d\hat{\psi}\nonumber\\
&&+(r^1-r_b)\Big((r^1-r_b)r_bg_{\alpha\beta}\hat{u}^\beta|_{\Omega^0}
-\frac{\p_\alpha \hat{p}}{\rho_b^+(u^0)_b^+}\Big|_{\Omega^0}\Big)dy^0\wedge dy^\alpha\nonumber\\
&&+\bar{g}_9(U,\psi,DU,D\psi).
\end{eqnarray}
Note that $\hat{\bar{u}}|_{\Omega^0}$ has been obtained in \S 3.4:
\begin{eqnarray}\label{c}
\hat{\bar{u}}|_{\Omega^0}=\hat{\bar{u}}^1|_{\Omega^0}+(r^1-r_b)\mu_1
(\hat{\psi}+\hat{r}^p-r_b)d{y}^0 +\bar{g}_{10}(U,U^-,\psi,D\psi).
\end{eqnarray}
Therefore, we may solve  $\widehat{d\bar{u}}$ from
\eqref{326}--\eqref{327} and obtain the estimate (see Lemma
\ref{lema5b}):
\begin{eqnarray*}
\|\widehat{d\bar{u}}\|_{C^{2,\alpha}_2(\Omega)}&\le&
C_2\Big(\norm{\bar{f}_{10}}_{C^{2,\alpha}_2(\Omega)}
+\norm{\bar{g}_{9}}_{C^{2,\alpha}_2(\Omega^0)}
+\norm{(\hat{\rho},\hat{p})}_{C^{3,\alpha}(\Omega)}\nonumber\\
&&\qquad+\|\hat{\psi}-r_b\|_{C^{3,\alpha}(\mathbf{S}^2)}+
\norm{\hat{\bar{u}}^1|_{\Omega^0}}_{C^{3,\alpha}_1(\mathbf{S}^2)}
+\norm{\bar{g}_{10}}_{C^{3,\alpha}(\Omega^0)}\Big).
\end{eqnarray*}

\subsection{Solving the Lower Regular Candidate Velocity
in $\Omega$ and on $\Omega^1$}

From \eqref{108}, by subtracting the background solution, we
formulate a transport equation of the velocity $\hat{\bar{u}}$ in
$\Omega$ (to distinguish the lower regular velocity obtained here
from the candidate velocity in the next subsection, we write $u$ as
$u_l$):
\begin{eqnarray}\label{a}
\lie_{u_b^+}\hat{\bar{u}}_l=-\frac{d\hat{p}}{\rho_b^+}
+\frac{\hat{\rho}dp_b^+}{(\rho_b^+)^2}+\bar{f}_{11}(U,\psi,DU,D\psi).
\end{eqnarray}
Here we used  that $\lie_{\hat{u}_l}\bar{u}_b^+-d\langle
\bar{u}_b^+,\hat{\bar{u}}_l\rangle=0$. With the Cauchy data
$\hat{\bar{u}}_l|_{\Omega^0}$ as the right-hand side of  \eqref{c},
we may uniquely solve $\hat{\bar{u}}_l$, particularly its
restriction on $\Omega^1$, i.e., $\hat{\bar{u}}_l|_{\Omega^1}$. We
also have an estimate:
\begin{eqnarray}\label{b}
\norm{\hat{\bar{u}}_l}_{C^{2,\alpha}_1(\Omega)}&\le& C_2
\Big(\norm{\bar{f}_{11}}_{C_1^{2,\alpha}(\Omega)}
+\norm{(\hat{p},\hat{\rho})}_{C^{3,\alpha}(\Omega)}
+\norm{\hat{\bar{u}}^1|_{\Omega^0}}_{C^{3,\alpha}_1(\mathbf{S}^2)}\nonumber\\
&& \qquad +\|\hat{\psi}-r_b\|_{C^{3,\alpha}(\mathbf{S}^2)}
+\norm{\bar{g}_{10}}_{C_1^{3,\alpha}(\Omega^0)}\Big).
\end{eqnarray}

\subsection{Solving the Candidate Velocity in $\Omega$}

Note that $\hat{\bar{u}}_l$ obtained in the above step is only in
$C_1^{2,\alpha}$; it is not our desired candidate velocity. In fact,
we will solve the velocity $\hat{\bar{u}}$ by the following elliptic
equation motivated by \eqref{186}:
\begin{eqnarray}\label{329}
\Delta \hat{\bar{u}}&=&d\langle \hat{\bar{u}}_l, \frac{d(\ln
p_b^+)}{\gamma}\rangle+d^*(\widehat{d\bar{u}})+d\big(\frac{D_{u_b^+}\hat{p}}{\gamma
p_b^+}-\frac{D_{u_b^+}p_b^+}{\gamma (p_b^+)^2}\hat{p}\big)
+\bar{f}_{12}(U,U^-,\psi,DU,D^2U).
\end{eqnarray}
We impose the Dirichlet condition \eqref{c} on $\Omega^0$ and the
Neumann condition on $\Omega^1$ according to \eqref{a} by
\begin{eqnarray}\label{187}
D_{u_b^+}\hat{\bar{u}}=D_{u_b^+}\hat{\bar{u}}_l-\lie_{u_b^+}\hat{\bar{u}}_l+\big(-\frac{d\hat{p}}{\rho_b^+}
+\frac{dp_b^+}{(\rho_b^+)^2}\hat{\rho}
+\bar{f}_{11}(U,\psi,DU,D\psi)\big)\big|_{\Omega^1}.
\end{eqnarray}
Note that, in local spherical coordinates,
\begin{eqnarray*}
D_{u_b^+}\hat{\bar{u}}_l-\lie_{u_b^+}\hat{\bar{u}}_l
=-(\hat{\bar{u}}_l)_0\p_0((u^0)_b^+)dx^0-\frac{1}{x^0}((u^0)_b^+)
((\hat{\bar{u}}_l)_\alpha dx^\alpha),
\end{eqnarray*}
so it does not contain the derivatives of $\hat{\bar{u}}_l$. Therefore,
we may solve $\hat{u}$ (i.e., $\hat{\bar{u}}$) in $\Omega$ by Lemma
\ref{lema4} to obtain
\begin{eqnarray*}
&&\norm{\hat{\bar{u}}}_{C_1^{3,\alpha}(\Omega)}\le
C_2\Big(\norm{\hat{\bar{u}}_l}_{C^{2,\alpha}_1(\Omega)}
  +\norm{d\hat{\bar{u}}}_{C^{2,\alpha}_2(\Omega)}
  +\norm{(\hat{p}, \hat{\rho})}_{C^{3,\alpha}(\Omega)}
  +\norm{\bar{f}_{12}}_{C_1^{1,\alpha}(\Omega)}
  +\norm{\bar{f}_{11}}_{C^{2,\alpha}(\Omega^1)}
\Big). \nonumber\\
&&
\end{eqnarray*}

\subsection{Well-Definedness of the
Mapping $\mathcal{T}: \mathcal{K}_\sigma\times O_\delta\to \mathcal{K}_\sigma\times O_\delta$}
First we notice that, for any $\psi\in \mathcal{K}_\sigma,\ U\in U_b^+ + O_\delta$,
and $U^-$ satisfying \eqref{015}, it is clear from the definition
that a higher order term $f$ satisfies
\begin{eqnarray}
\norm{f}\le C_2(\varepsilon+\delta^2+\sigma^2).
\end{eqnarray}
Then, combining this with the estimates in \S 3.2--\S 3.7 yields
\begin{eqnarray}
\|\hat{U}\|_{C^{3,\alpha}(\Omega)}+\|\hat{\psi}-r_b\|_{C^{4,\alpha}
(\mathbf{S}^2)}\le C_2(\delta^2+\sigma^2+\varepsilon).
\end{eqnarray}
Now we choose $C_0=2C_2$ and $\varepsilon_0\le \frac{1}{8C_2^2}$.
Then, for $\delta=\sigma=C_0\varepsilon$, the above estimate shows
that $\hat{U}\in O_\delta$ and $\hat{\psi}\in \mathcal{K}_\sigma.$

\subsection{Contraction of the Mapping}
Now, for $i=1,2$, choose arbitrarily  $\psi^{(i)}\in \mathcal{K}_\sigma$ and
$\check{U}^{(i)}\in O_{\delta}$, and let $\hat{U}^{(i)}$ and
$\hat{\psi}^{(i)}$ be obtained by the above process correspondingly.
Then
\begin{eqnarray}
\|\hat{U}^{(1)}-\hat{U}^{(2)}\|_{C^{2,\alpha}(\Omega)}\le
C_3\varepsilon
\Big(\|\check{U}^{(1)}-\check{U}^{(2)}\|_{C^{2,\alpha}(\Omega)}
+\|\psi^{(1)}-\psi^{(2)}\|_{C^{3,\alpha}(\mathbf{S}^2)}\Big),\label{333}\\
\|\hat{\psi}^{(1)}-\hat{\psi}^{(2)}\|_{C^{3,\alpha}(\mathbf{S}^2)}\le
C_3\varepsilon
\Big(\|\check{U}^{(1)}-\check{U}^{(2)}\|_{C^{2,\alpha}(\Omega)}
+\|\psi^{(1)}-\psi^{(2)}\|_{C^{3,\alpha}(\mathbf{S}^2)}\Big).\label{334}
\end{eqnarray}
This can be achieved by employing the equations of
$\hat{U}^{(1)}-\hat{U}^{(2)}$ and the estimates of higher order terms as
sketched below.

First, for any higher order term $f$, when
$\delta=\sigma=C_0\varepsilon$, $\psi^{(i)}\in \mathcal{K}_\sigma$,
and $\check{U}^{(i)}\in O_{\delta}$, we have
\begin{eqnarray*}
&&\|f(\psi^{(1)},U^{(1)},{U^-}^{(1)},\cdots)-f(\psi^{(2)},U^{(2)},{U^-}^{(2)},\cdots)\|_*\nonumber\\
&&\le
C_2\varepsilon\Big(\|\psi^{(1)}-\psi^{(2)}\|_{C^{3,\alpha}(\mathbf{S}^2)}
+\|\check{U}^{(1)}-\check{U}^{(2)}\|_{C^{2,\alpha}(\Omega)}\Big),
\end{eqnarray*}
where  $U^{(i)}:=\check{U}^{(i)}+U_b^+$,
${U^-}^{(i)}={\psi^{(i)}}^*(U^-)$, and $\|\cdot\|_*$ is the
corresponding norm for  $f$ when $\psi\in C^{3,\alpha}$ and $U\in
C^{2,\alpha}$. As two examples, we have

(a). By the mean value theorem and \eqref{015},
\begin{eqnarray}
&&\|(\psi^{(1)})^*(U^--U_b^-)
 -(\psi^{(2)})^*(U^--U_b^-)\|_{C^{2,\alpha}(\mathbf{S}^2)}\nonumber\\
&&\ \ \le C \|D(U^--U_b^-)\|_{C^{2,\alpha}(\mathcal{M})}\|\psi^{(1)}
-\psi^{(2)}\|_{C^{2,\alpha}(\mathbf{S}^2)}\le
C\varepsilon\|\psi^{(1)} -\psi^{(2)}\|_{C^{3,\alpha}(\mathbf{S}^2)};
\end{eqnarray}

(b). For
$\bar{f}_9^{(i)}=\bar{f}_9(U^{(i)},{U^-}^{(i)},\psi^{(i)},
DU^{(i)},D^2U^{(i)})$,
\begin{eqnarray}
\norm{\bar{f}_9^{(1)}-\bar{f}_9^{(2)}}_{C^\alpha(\Omega)} \le
C\varepsilon\Big(\|\psi^{(1)}-\psi^{(2)}\|_{C^{3,\alpha}(\mathbf{S}^2)}
+\|\check{U}^{(1)}-\check{U}^{(2)}\|_{C^{2,\alpha}(\Omega)}\Big)
\end{eqnarray}
by the definition of higher order terms.

Next, consider the equations of $\hat{U}^{(1)}-\hat{U}^{(2)}$. The
right-hand sides of the elliptic equations of
$\hat{p}^{(1)}-\hat{p}^{(2)}$, such as
$\bar{f}_9^{(1)}-\bar{f}_9^{(2)}$ (cf. \eqref{175}) and
$\bar{g}_8^{(1)}-\bar{g}_8^{(2)}$ (cf.\eqref{309}), are in
${C^\alpha}$. Therefore, we can  obtain a $C^{2,\alpha}$--estimate of
$\hat{U}^{(1)}-\hat{U}^{(2)}$, rather than a $C^{3,\alpha}$--estimate.
The reason is that the loss of derivative  occurs in solving
the transport equations. For example, from \eqref{en} and
\eqref{en1}, we have
\begin{eqnarray}
&&D_{u^{(1)}}(\widehat{A(S)}^{(1)}-\widehat{A(S)}^{(2)})
=-D_{u^{(1)}-u^{(2)}}\widehat{A(S)}^{(2)} \qquad\qquad \text{in}\ \ \Omega,\label{en2}\\
&&\widehat{A(S)}^{(1)}-\widehat{A(S)}^{(2)}\nonumber\\
&&=\mu_4(\hat{\psi}^{(1)}-\hat{\psi}^{(2)})
+\big(\bar{g}_4(U^{(1)},{U^-}^{(1)},\psi^{(1)},D\psi^{(1)})
-\bar{g}_4(U^{(2)},{U^-}^{(2)},\psi^{(2)},D\psi^{(2)})\big)
\quad\text{on} \ \Omega^0.\qquad \quad\label{en3}
\end{eqnarray}
Note that the right-hand side of \eqref{en2} is only in
$C^{2,\alpha}(\Omega)$.

We omit the details of deriving estimates \eqref{333}--\eqref{334},
since
the process is similar to those in \S 3.2--\S 3.7 except the two points explained above.

Then the mapping $\mathcal{T}$ has a unique fixed point if
$\varepsilon_0$ is small enough by a simple generalized Banach fixed
point theorem. In particular, the uniqueness of the fixed point implies
the transonic shock solution of \eqref{101}--\eqref{103} satisfying
the requirements in Theorem \ref{thm101} is unique, as claimed there. We
also note that, although $\mathcal{T}$ always has a fixed point as we
proved, it is not clear yet whether this  fixed point is a
solution to \eqref{101}--\eqref{103}; therefore, in order to obtain the
existence result as assumed in Theorem \ref{thm101}, it requires
further work, which is out of scope of this paper.

\bigskip
\begin{appendix}
\section{Some Notations and Facts of Differential Geometry}

In this appendix, we present some notations in differential geometry and
some basic facts used above for self-containedness.

\subsection{The Metric of $\mathcal{M}$ in Local Spherical Coordinates}
In local spherical coordinates, for $r^0\le x^0\le r^1, 0\le
x^1<\pi, -\pi\le x^2<\pi,$ the standard Euclidean metric of
$\mathcal{M}$ can be written  as
\[
G=G_{ij}dx^i\otimes dx^j=dx^0\otimes dx^0+(x^0)^2dx^1\otimes
dx^1+(x^0\, \sin x^1)^2dx^2\otimes dx^2.
\]
Hence, $\sqrt{G}:=\sqrt{\det (G_{ij})}=(x^0)^2\sin x^1$. For the
Christoffel symbols, since $\Gamma_{jk}^i=\Gamma^i_{kj}$,
only the following are nonzero:
\begin{eqnarray*}
&&\Gamma_{11}^0=-x^0, \quad \Gamma^0_{22}=-x^0(\sin x^1)^2, \quad
\Gamma^1_{01}=\Gamma^1_{10}=\frac{1}{x^0},\\
&&\Gamma^1_{22}=-\sin x^1\cos x^1, \quad
\Gamma^2_{02}=\Gamma^2_{20}=\frac{1}{x^0}, \quad
\Gamma^2_{12}=\Gamma^2_{21}=\cot x^1.
\end{eqnarray*}
We also use $(G^{ij})$ to denote the inverse of the matrix
$(G_{ij})$, and $|u|^2=G(u,u)=G_{ij}u^iu^j$.

In local spherical coordinates, we write the standard metric of
$\mathbf{S}^2$ as
$$
g=g_{\alpha\beta}dx^\alpha\otimes
dx^\beta:=dx^1\otimes dx^1+(\sin x^1)^2dx^2\otimes dx^2.
$$
Therefore, we have
$$
\sqrt{g}:=\sqrt{\det (g_{ij})}=\sin x^1=\frac{\sqrt{G}}{(x^0)^2}.
$$
The nonzero Christoffel symbols are
\begin{eqnarray}
\gamma_{22}^1=-\sin x^1\cos x^1,\qquad
\gamma_{12}^2=\gamma^2_{21}=\cot x^1.
\end{eqnarray}

\subsection{Some Lemmas} The following results are used in the text.

\begin{lemma}\label{lema3}
There exists a unique $\omega\in A^1(\mathbf{S}^2)$ that solves
\begin{eqnarray}\label{a4}
d\omega=\chi,\qquad d^*\omega=\psi,
\end{eqnarray}
if $\chi\in A^2(\mathbf{S}^2)$ and $\psi\in A^0(\mathbf{S}^2)$
satisfy  $\int_{\mathbf{S}^2}\chi=0$ and
$\int_{\mathbf{S}^2}\psi\,\vol=0$.
\end{lemma}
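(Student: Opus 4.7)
My plan is to exploit the Hodge decomposition on the compact, simply connected $2$--manifold $\mathbf{S}^2$. Since $H^1(\mathbf{S}^2;\mathbb{R})=0$, any $1$--form $\omega\in A^1(\mathbf{S}^2)$ admits an $L^2$--orthogonal decomposition $\omega=d\alpha+d^*\beta$ with $\alpha\in A^0(\mathbf{S}^2)$ and $\beta\in A^2(\mathbf{S}^2)$, and the harmonic summand is absent. Inserting this ansatz into \eqref{a4} and using $d^2=0$ and $(d^*)^2=0$, together with $d\beta=0$ (as $\beta$ is top--dimensional) and $d^*\alpha=0$ (as there are no $(-1)$--forms), one obtains $d\omega=dd^*\beta=\Delta'\beta$ and $d^*\omega=d^*d\alpha=\Delta'\alpha$. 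The system therefore decouples into two Poisson equations, $\Delta'\alpha=\psi$ on $0$--forms and $\Delta'\beta=\chi$ on $2$--forms, and constructing $\omega$ reduces to solving them.

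For existence I would invoke the Fredholm alternative. Recall that $\Delta'$ is a nonnegative selfadjoint elliptic operator on each form degree. On $A^0(\mathbf{S}^2)$ its kernel consists of the constants, so $\Delta'\alpha=\psi$ has a solution iff $\int_{\mathbf{S}^2}\psi\,\vol=0$, which is precisely one of the standing assumptions. Writing $\beta=u\,\vol$ and using $*\Delta'=\Delta'*$ together with $*\vol=1$, the equation $\Delta'\beta=\chi$ becomes $\Delta'u=*\chi$, a scalar Poisson equation whose solvability condition is $\int_{\mathbf{S}^2}(*\chi)\,\vol=\int_{\mathbf{S}^2}\chi=0$, again by hypothesis. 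The potentials $\alpha$ and $u$ are thereby determined up to additive constants, but these constants affect neither $d\alpha$ nor $d^*\beta$, so setting $\omega:=d\alpha+d^*\beta$ produces a well-defined solution of \eqref{a4}.

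For uniqueness, if $\omega_1,\omega_2$ are two solutions, then $\eta:=\omega_1-\omega_2$ satisfies $d\eta=0$ and $d^*\eta=0$. Since $\mathbf{S}^2$ is simply connected, closedness yields $\eta=dg$ for some $g\in A^0(\mathbf{S}^2)$, and then $d^*\eta=\Delta'g=0$ forces $g$ to be constant, so $\eta\equiv0$; equivalently, $\eta$ is a harmonic $1$--form on $\mathbf{S}^2$ and $H^1(\mathbf{S}^2;\mathbb{R})=0$. The proof is essentially routine once Hodge theory on $\mathbf{S}^2$ is in hand; the only points deserving attention are the correct matching of the two solvability conditions to the integral hypotheses on $\psi$ and $\chi$, and the dimension-two observation that the ``vector potential'' $\beta$ is a single scalar multiple of $\vol$, so that both Poisson equations are genuinely scalar. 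I do not anticipate any serious obstacle.
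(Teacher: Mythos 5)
Your proof is correct, but it takes a genuinely different route through Hodge theory than the paper does. The paper solves the second-order vector equation $\Delta\omega = d^*\chi + d\psi$ directly (which has a unique solution because $b_1(\mathbf{S}^2)=0$ makes the Laplacian on $1$-forms invertible), and then \emph{verifies} a posteriori that the first-order system \eqref{a4} holds: it checks that $d\omega-\chi$ and $d^*\omega-\psi$ are harmonic, invokes $\dim\mathcal{H}^2=\dim\mathcal{H}^0=1$ with explicit generators $\vol$ and $1$, and uses the two integral hypotheses to kill the remaining constants. You instead apply the Hodge decomposition to the \emph{unknown} $\omega = d\alpha + d^*\beta$ (no harmonic part, again by $b_1=0$), observe that the system decouples into the scalar Poisson equations $\Delta'\alpha=\psi$ on $A^0$ and $\Delta'\beta=\chi$ on $A^2$, and then invoke the Fredholm alternative, where the same two integral hypotheses appear as the natural solvability conditions. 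The net effect is the same, but the bookkeeping is different: your approach makes the solvability conditions structurally transparent at the outset and reduces everything to standard scalar Poisson theory, whereas the paper's approach front-loads invertibility of the vector Laplacian and back-loads the verification of the first-order system. Your uniqueness argument ($\eta$ closed and coclosed $\Rightarrow$ harmonic $\Rightarrow$ zero) is identical in substance to the paper's, and both rely on the same Betti-number facts; either presentation is complete.
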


\begin{proof}
Since the first Betti number $b_1$ of $\mathbf{S}^2$ is $0$ (i.e.,
$b_1=0$), by the Hodge theorem, we can uniquely solve $\omega$ via
\begin{eqnarray}
\Delta \omega=d^*\chi+d\psi\in A^1(\mathbf{S}^2).
\end{eqnarray}
It suffices to show that \eqref{a4} holds.

First, since $\Delta d=d\Delta$, we have
\begin{eqnarray}
\Delta(d\omega-\chi)=d\Delta\omega-(dd^*\chi+d^*d\chi)=0.
\end{eqnarray}
In addition, $\int_{\mathbf{S}^2} (d\omega-\chi)=0$ by the Stokes
theorem and the assumption. Since the second Betti number $b_2$ of
$\mathbf{S}^2$ is $1$,  by the Hodge theorem, the space
$\mathcal{H}^2$ of harmonic $2$-forms on $\mathbf{S}^2$ is
one-dimensional. Note that $\vol\in\mathcal{H}^2$ because $d^*=-*d*$
and $*\vol=1$. Therefore, $d\omega-\chi=0$.

Similarly, we have
\begin{eqnarray}
\Delta(d^*\omega-\psi)&=&d^*\Delta\omega-(dd^*\psi+d^*d\psi)=0
\end{eqnarray}
due to $\Delta d^*=d^*\Delta$, and
$$
\int_{\mathbf{S}^2}(d^*\omega-\psi)\,\vol=0,
$$
by the divergence theorem and the assumption. Note that the zero-th
Betti number $b_0$ of $\mathbf{S}^2$ is $1$;  according to the Hodge
theorem, the space $\mathcal{H}^0$ of harmonic functions on
$\mathbf{S}^2$ is one-dimensional. One easily sees that $1\in
\mathcal{H}^0$. Therefore, $d^*\omega-\psi=0$ as desired.
\end{proof}

\begin{lemma}\label{lema4}
Let the two disjoint components of the boundary
$\partial\mathcal{M}$ be $\mathcal{M}^0$ and $\mathcal{M}^1$,
$k=2,3$, and $\alpha\in(0,1)$. Assume that $\beta\in
C_1^{k-2,\alpha}(\mathcal{M})$, $\omega_0$ is a $C^{k,\alpha}$
$1$-form, and $\omega_1$ is a $C^{k-1,\alpha}$ $1$-form in
$\mathcal{M}$. Then there exists a unique $C^{k,\alpha}$ $1$-form
$\omega$ that solves the following problem:
\begin{eqnarray}
&&\Delta \omega=\beta \qquad \text{in}\ \ \mathcal{M},\label{A7}\\
&&\omega|_{\mathcal{M}^0}=\omega_0|_{\mathcal{M}^0},\label{a8}\\
&&D_{\p_0}
\omega|_{\mathcal{M}^1}=\omega_1|_{\mathcal{M}^1}.\label{a9}
\end{eqnarray}
Moreover,
\begin{eqnarray}\label{a10}
\norm{\omega}_{C_1^{k,\alpha}(\mathcal{M})}\le
C\Big(\norm{\beta}_{C_1^{k-2,\alpha}(\mathcal{M})}
+\norm{\omega_0}_{C_1^{k,\alpha}(\mathcal{M})}
+\norm{\omega_1}_{C_1^{k-1,\alpha}(\mathcal{M})}\Big).
\end{eqnarray}
\end{lemma}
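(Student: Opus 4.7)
The plan is to exploit the flatness of $\mathcal{M}\subset\mathbb{R}^3$ and reduce the $1$-form BVP to three decoupled scalar mixed BVPs by passing to global Cartesian coordinates $(y^1,y^2,y^3)$ on $\mathbb{R}^3$. Writing $\omega=\omega_k\,dy^k$, the Weitzenb\"ock identity $\Delta=\nabla^*\nabla+\mathrm{Ric}$ on $1$-forms, combined with the vanishing of $\mathrm{Ric}$ on flat $\mathbb{R}^3$ and of the Christoffel symbols in the Cartesian chart, yields $(\Delta\omega)_k=-\delta^{ij}\partial_i\partial_j\omega_k$. Hence \eqref{A7} decouples into
\[
-\Delta_{\mathrm{sc}}\omega_k=\beta_k\quad\text{in }\mathcal{M},\qquad k=1,2,3,
\]
where $\Delta_{\mathrm{sc}}=\sum_j\partial_j^2$ is the ordinary Laplacian on $\mathbb{R}^3$ and $\beta_k$ are the Cartesian components of $\beta$.

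The boundary conditions transform equally cleanly. Condition \eqref{a8} becomes $\omega_k|_{\mathcal{M}^0}=(\omega_0)_k|_{\mathcal{M}^0}$ for each $k$. Since in the Cartesian chart the radial field $\partial_0=\partial_r=|y|^{-1}y^l\partial_l$ is the outward unit normal to the outer sphere $\mathcal{M}^1$, and since the Cartesian Levi-Civita connection is trivial, we have $D_{\partial_0}\omega=(\partial_r\omega_k)\,dy^k$. Consequently \eqref{a9} is exactly the outward-normal Neumann condition $\partial_r\omega_k|_{\mathcal{M}^1}=(\omega_1)_k|_{\mathcal{M}^1}$ on each component.

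Thus for each $k=1,2,3$ we obtain a standard mixed Dirichlet--Neumann problem for Poisson's equation on the smooth spherical shell with disjoint boundary components, so no corner regularity issue arises. Uniqueness of the homogeneous problem follows by integration by parts:
\[
\int_{\mathcal{M}}|\nabla\omega_k|^2=\int_{\mathcal{M}^0}\omega_k\,\partial_\nu\omega_k+\int_{\mathcal{M}^1}\omega_k\,\partial_\nu\omega_k=0,
\]
since one factor vanishes on each component; combined with the Dirichlet condition this forces $\omega_k\equiv 0$. Existence then follows from the Fredholm alternative (the adjoint problem has the same mixed type, hence the same uniqueness), and the Schauder estimate
\[
\|\omega_k\|_{C^{k,\alpha}(\mathcal{M})}\le C\bigl(\|\beta_k\|_{C^{k-2,\alpha}(\mathcal{M})}+\|(\omega_0)_k\|_{C^{k,\alpha}(\mathcal{M})}+\|(\omega_1)_k\|_{C^{k-1,\alpha}(\mathcal{M})}\bigr)
\]
comes from the classical Schauder theory for the Dirichlet and oblique derivative problems (Theorems~6.19 and~6.31 in \cite{GT}). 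Summing over $k$ and invoking the equivalence on the compact shell between the intrinsic $C_1^{k,\alpha}$ norm defined in spherical charts and the componentwise Cartesian $C^{k,\alpha}$ norm yields \eqref{a10}.

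The only step I regard as nontrivial is verifying the decoupling of the form Laplacian in Cartesian coordinates via Weitzenb\"ock plus flatness; once this is in place the proof is a routine application of scalar mixed-BVP elliptic theory, so I do not expect a serious obstacle. A minor technical point is the aforementioned norm equivalence, which is straightforward since the spherical-to-Cartesian chart transition is a smooth diffeomorphism with Jacobians uniformly bounded above and below on the compact shell away from the origin.
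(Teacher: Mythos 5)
Your proposal is correct and follows essentially the same route as the paper's own proof: pass to global Cartesian coordinates, use the Weitzenb\"ock formula together with flatness (vanishing Ricci and Christoffel symbols) to decouple the Hodge Laplacian into three scalar Poisson operators, observe that the boundary conditions become a Dirichlet condition on one sphere and a normal-derivative (Neumann) condition on the other, and invoke standard scalar mixed-BVP elliptic theory. The paper leaves the scalar-problem existence, uniqueness, and Schauder estimates as ``clear''; you supply exactly those details, so the two arguments differ only in level of explicitness.
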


\begin{proof}
For $\mathcal{M}\subset\mathbb{R}^3$, we choose the spherical
coordinates:
$$
\tilde{y}^0=(r^1-r_b)y^0+r_b, \quad \tilde{y}^\alpha=y^\alpha,
\qquad \alpha=1,2.
$$
Then we use the standard global Descartes coordinates:
$$
(z^0,z^1,z^2) \qquad \text{with}\,\,\,
 \tilde{y}^0=\sqrt{(z^0)^2+(z^1)^2+(z^2)^2}.
$$
Let $\omega=\omega_idz^i$. By the Weizenb\"{o}ck formula,
\begin{equation}\label{A.11}
 \Delta \omega=-\sum_{i=0}^2(\p_{z^i}^2 \omega_j) dz^j
\end{equation}
holds globally (cf. \cite{Fr}). Therefore, \eqref{A7}--\eqref{a9}
represent three decoupled boundary value problems of the Poisson
equations. The uniqueness, existence, and estimates of the solution
are then clear.
\end{proof}

The following result follows from  Lemma 4.6 in \cite{ChF1}. It
particularly implies that the norm of a smooth function  in a
manifold $\Omega_2$ is equivalent to the norm of its pull back in
another manifold $\Omega_1$ which is homeomorphic to $\Omega_2$.

\begin{lemma}\label{lemaa}
      Let $\Omega_1$ and $\Omega_2$ be two open sets in $\mathbb{R}^n$ and
      $u\in C^{k,\alpha}({\Omega}_2)$.
Let $k$ be a positive integer and $\alpha\in (0,1)$.
      Let $\Phi: \Omega_1\rightarrow\Omega_2$
      satisfy $\Phi\in C^{k,\alpha}({\Omega}_1;\Omega_2)$. Then $u\circ\Phi\in
      C^{k,\alpha}({\Omega}_1)$ and satisfies
      \begin{equation}\label{4310}
            \|u\circ\Phi\|_{C^{k,\alpha}({\Omega}_1)}
                 \le C\|u\|_{C^{k,\alpha}({\Omega}_2)},
      \end{equation}
     where $C=C(n, \|\Phi\|_{C^{k,\alpha}({\Omega}_1;\Omega_2)}).$
\end{lemma}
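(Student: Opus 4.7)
The plan is to proceed by induction on $k\ge 1$, using the chain rule to strip one derivative off $u\circ\Phi$ at each step and applying the induction hypothesis to the factors that appear. No deep analytic tools are needed beyond the pointwise chain rule and the standard product estimate $\|fg\|_{C^{k-1,\alpha}}\le C\|f\|_{C^{k-1,\alpha}}\|g\|_{C^{k-1,\alpha}}$ for H\"older spaces on open sets.

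For the base case $k=1$, I would first note that for any $x,y\in\Omega_1$,
\[
|u\circ\Phi(x)-u\circ\Phi(y)|\le [u]_{C^{0,\alpha}(\Omega_2)}|\Phi(x)-\Phi(y)|^\alpha\le [u]_{C^{0,\alpha}}\|D\Phi\|_{C^0}^\alpha|x-y|^\alpha,
\]
so $u\circ\Phi\in C^{0,\alpha}(\Omega_1)$ with the appropriate bound. The pointwise chain rule
\[
\partial_i(u\circ\Phi)(x)=\sum_j(\partial_j u)(\Phi(x))\,\partial_i\Phi^j(x)
\]
then expresses $D(u\circ\Phi)$ as a finite sum of products of functions in $C^{0,\alpha}(\Omega_1)$, each factor being estimated by $\|u\|_{C^{1,\alpha}}$ times a polynomial in $\|\Phi\|_{C^{1,\alpha}}$. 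For the inductive step, assuming the claim for $k-1$, given $u\in C^{k,\alpha}(\Omega_2)$ and $\Phi\in C^{k,\alpha}(\Omega_1;\Omega_2)$, each $\partial_j u$ lies in $C^{k-1,\alpha}(\Omega_2)$ with norm controlled by $\|u\|_{C^{k,\alpha}}$, so the induction hypothesis yields
\[
\|(\partial_j u)\circ\Phi\|_{C^{k-1,\alpha}(\Omega_1)}\le C(n,\|\Phi\|_{C^{k,\alpha}})\|u\|_{C^{k,\alpha}(\Omega_2)}.
\]
Combining this with $\partial_i\Phi^j\in C^{k-1,\alpha}(\Omega_1)$ via the product estimate and the chain-rule identity places $\partial_i(u\circ\Phi)$ in $C^{k-1,\alpha}(\Omega_1)$ with the desired polynomial dependence, which is equivalent to $u\circ\Phi\in C^{k,\alpha}(\Omega_1)$ with the asserted bound \eqref{4310}.

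The main obstacle is purely bookkeeping: one must verify that the constant $C$ accumulated through the induction remains polynomial in $\|\Phi\|_{C^{k,\alpha}(\Omega_1;\Omega_2)}$ and linear in $\|u\|_{C^{k,\alpha}(\Omega_2)}$. A cleaner alternative is to invoke the Fa\`a di Bruno formula directly, expressing $D^\beta(u\circ\Phi)$ as a sum of terms $(D^\gamma u)\circ\Phi$ times products of derivatives of $\Phi$ of orders $\le|\beta|$, and applying the product and composition estimates only once to the full expansion; this is essentially the packaging of Lemma 4.6 in \cite{ChF1} cited by the authors. Since $\Omega_1$ and $\Omega_2$ are open and H\"older norms are defined via interior differences, no boundary issue or extension procedure is required.
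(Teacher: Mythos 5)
The paper offers no proof of this lemma: it simply records that the result ``follows from Lemma 4.6 in \cite{ChF1},'' so there is no argument in the paper for you to match. Your inductive chain-rule proof (equivalently, the Fa\`a di Bruno packaging) is the standard way to establish composition stability of H\"older spaces, and the scheme is sound: strip one derivative, apply the induction hypothesis to $\partial_j u\in C^{k-1,\alpha}(\Omega_2)$ and combine with $\partial_i\Phi^j\in C^{k-1,\alpha}(\Omega_1)$ via the Banach-algebra property of $C^{k-1,\alpha}$.

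The one genuine weak spot is your closing remark that, since $\Omega_1,\Omega_2$ are open, ``no boundary issue or extension procedure is required.'' The obstruction that does arise is not a boundary issue but a \emph{quasi-convexity} issue, and it enters exactly where you write $|\Phi(x)-\Phi(y)|\le \|D\Phi\|_{C^0}^{\phantom{1}}|x-y|$: this estimate presupposes that the segment $[x,y]$ lies in $\Omega_1$, or more generally that any two points of $\Omega_1$ can be joined inside $\Omega_1$ by a path of length comparable to $|x-y|$. On a general open set (a slit ball, a thin horseshoe) the inequality fails, and indeed the lemma as literally stated --- ``two open sets in $\mathbb{R}^n$'' with $C$ depending only on $n$ and $\|\Phi\|_{C^{k,\alpha}}$ --- is false without such a geometric hypothesis. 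The same control is used tacitly when you pass from $\Phi\in C^{k,\alpha}$ to $\Phi\in C^{k-1,\alpha}$ inside the induction, since on an arbitrary open set $[D^{k-1}\Phi]_{C^{0,\alpha}}$ is not dominated by the $C^{k,\alpha}$-norm. This defect is inherited from the loose phrasing of the lemma in the paper (Chen--Feldman's Lemma 4.6 is stated for their specific well-behaved domains), and it is harmless in the application here, where the relevant set is $\Omega=[0,1]\times\mathbf{S}^2$ in nice coordinates; but the claim that openness alone suffices should be removed, and a bounded quasi-convexity (or Lipschitz-boundary) hypothesis made explicit for the estimate to hold with the stated dependence of the constant.
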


\begin{lemma} [\cite{BC}] \label{lema5a}
The eigenvalues of the Hodge Laplacian
on
$\mathbf{S}^2$ are $\lambda_n=n(n+1), n=0, 1, 2, \cdots$, and there
are $2n+1$ linear independent eigenfunctions
$\{u_{n,m}\}_{m=1,\cdots, 2n+1}$ corresponding to $\lambda_n$, so
that the eigenfunctions $\{u_{n,m}\}_{n\in
\mathbb{N}\cup\{0\},m=1,\cdots,2n+1}$ are smooth and form a complete
unit orthogonal basis  of $L^2(\mathbf{S}^2)$.
\end{lemma}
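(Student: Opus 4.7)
The plan is to identify the eigenfunctions explicitly as restrictions of harmonic homogeneous polynomials on $\mathbb{R}^3$ (the classical spherical harmonics) and then invoke standard spectral theory on compact manifolds for completeness. Since the statement concerns the Hodge Laplacian on $0$-forms, $\Delta'$ coincides with the (positive) Laplace--Beltrami operator on functions.

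First, I would exploit the decomposition of the Euclidean Laplacian in spherical coordinates,
$$
\Delta_{\mathbb{R}^3}=\frac{1}{r^2}\partial_r\bigl(r^2\partial_r\bigr)-\frac{1}{r^2}\Delta',
$$
and observe that for any harmonic homogeneous polynomial $p$ of degree $n$ on $\mathbb{R}^3$, writing $p(x)=r^n Y(\sigma)$ with $\sigma=x/|x|\in\mathbf{S}^2$, the condition $\Delta_{\mathbb{R}^3}p=0$ immediately yields $\Delta' Y = n(n+1)\, Y$. Hence every such restriction is a smooth eigenfunction of $\Delta'$ with eigenvalue $\lambda_n=n(n+1)$.

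Second, I would compute $\dim \mathcal{H}_n = 2n+1$, where $\mathcal{H}_n$ denotes the space of harmonic homogeneous polynomials of degree $n$ in three variables. Since $\dim \mathcal{P}_n=\binom{n+2}{2}$ and the map $\Delta:\mathcal{P}_n\to\mathcal{P}_{n-2}$ is surjective, the rank–nullity theorem gives
$$
\dim\mathcal{H}_n=\binom{n+2}{2}-\binom{n}{2}=2n+1.
$$
Applying Gram--Schmidt in each $\mathcal{H}_n$ with respect to the $L^2(\mathbf{S}^2)$ inner product produces the orthonormal families $\{u_{n,m}\}_{m=1}^{2n+1}$; orthogonality across distinct $\lambda_n$ is immediate from the self-adjointness of $\Delta'$.

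Third, for completeness and to rule out eigenvalues outside $\{n(n+1)\}$, I would iterate the decomposition $\mathcal{P}_n=\mathcal{H}_n\oplus |x|^2\,\mathcal{P}_{n-2}$ (dual to the surjectivity of $\Delta$) to conclude that every polynomial restricted to $\mathbf{S}^2$ is a finite sum of elements of the $\mathcal{H}_k$'s. Since polynomials are dense in $C(\mathbf{S}^2)$ by Stone--Weierstrass, hence in $L^2(\mathbf{S}^2)$, the family $\{u_{n,m}\}$ is a complete orthonormal basis. Any purported eigenfunction corresponding to some $\lambda\notin\{n(n+1)\}$ would then be $L^2$-orthogonal to every $u_{n,m}$, and hence vanish.

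The main obstacle is the surjectivity of $\Delta$ on homogeneous polynomials; the cleanest route is \emph{Fischer duality}, using the positive-definite pairing $\langle p,q\rangle:=p(\partial)\,q|_0$ on $\mathcal{P}_n$, under which multiplication by $|x|^2$ is the adjoint of $\Delta$. If $q\in\mathcal{P}_{n-2}$ were orthogonal to the image of $\Delta$, then $\langle |x|^2 q,|x|^2 q\rangle=\langle \Delta(|x|^2 q),q\rangle=0$, forcing $q=0$. Once this algebraic fact is in hand, everything else follows from Stone--Weierstrass and standard compact self-adjoint spectral theory.
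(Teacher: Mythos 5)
The paper does not prove this lemma; it simply cites Bleecker--Csordas \cite{BC} and takes the spectral decomposition of the Laplace--Beltrami operator on $\mathbf{S}^2$ as a known textbook fact. Your proposal supplies the classical self-contained argument: you correctly read ``Hodge Laplacian'' here as the (positive) Laplacian acting on $0$-forms, derive $\Delta' Y = n(n+1)Y$ from the radial/angular split of $\Delta_{\mathbb{R}^3}$ applied to $r^nY$, get $\dim\mathcal{H}_n=2n+1$ by rank--nullity once the surjectivity of $\Delta:\mathcal{P}_n\to\mathcal{P}_{n-2}$ is known, establish that surjectivity via Fischer duality, and obtain completeness (and the absence of other eigenvalues) by iterating $\mathcal{P}_n=\mathcal{H}_n\oplus|x|^2\mathcal{P}_{n-2}$ plus Stone--Weierstrass and self-adjointness. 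All steps check out, and the dimension count $\binom{n+2}{2}-\binom{n}{2}=2n+1$ is right. Your write-up is thus a correct proof of a result the authors chose to quote rather than prove; it adds nothing contradictory to the paper and is the standard route found in the cited source.
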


\subsection{On the Transport Equations Involving Lie Derivatives in
Manifolds} In Section 3.5, we need solve the differential forms from
the Cauchy problems of the transport equations involving Lie derivatives.
Here we present the basic theorem with a proof.

Let $M$ be an $n$-dimensional closed $C^{\infty}$ differentiable
manifold, $\mathcal{M}=[0,T]\times {M}$, $X$ a $C^{k+1}$-vector
field in $\mathcal{M}$ which is
transverse to $\Gamma^t=\{t\}\times M$ for $t\in[0,T]$, and $f$ a
$C^k$--function in $\mathcal{M}$ ($k$ is a nonnegative integer).
Without loss of generality, we
assume that $X$ points to the interior of $\mathcal{M}$ when
restricted on $\Gamma^0$. We wish to solve a $r$-form $\omega$
($r\ge1$) in $\mathcal{M}$ which satisfies the following problem:
\begin{eqnarray}
&\lie_X\omega+f \omega=\theta & \text{in}\quad \mathcal{M},\label{1}\\
&\omega=\omega^0 & \text{on}\quad \Gamma^0.\label{2}
\end{eqnarray}
Here $\lie$ is the Lie derivative in $\mathcal{M}$,  $\theta$ is a
given $C^k$ $r$-form in $\mathcal{M}$, and $\omega^0$ is a given
point-wise defined $r$-form of class $C^k$ on $\Gamma^0.$

We have the following existence and uniqueness results:

\begin{lemma}\label{lema5b}
Under the above assumptions, there is a unique $r$-form $\omega$
in $\mathcal{M}$ that solves \eqref{1}--\eqref{2}. In addition, there
holds
\begin{eqnarray}\label{estimate}
\norm{\omega}_{C^{k}(\mathcal{M})}\le
C\big(\norm{\theta}_{C^{k}(\mathcal{M})}+\norm{\omega^0}_{C^{k}
(\Gamma^0)}\big),
\end{eqnarray}
with a positive constant $C$ depending only on
$\norm{f}_{C^{k}(\mathcal{M})}$ and
$\norm{X}_{C^{k+1}(\mathcal{M})}.$
\end{lemma}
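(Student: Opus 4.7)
The plan is to use the flow of $X$ to convert \eqref{1} into a family of linear first-order ODEs along the integral curves of $X$, to solve these ODEs explicitly by integrating factor, and then to read off the regularity and the estimate from the resulting formula.

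Since $X$ is $C^{k+1}$ and transverse to every $\Gamma^t$ with the derivative of $s\mapsto t(\Phi_s(y))$ uniformly bounded below by compactness, standard ODE theory provides a $C^{k+1}$ flow $\Phi_s$ of $X$, and every trajectory starting at $y\in\Gamma^0$ reaches $\Gamma^T$ at a unique positive time $S(y)$. Hence $\Psi(s,y):=\Phi_s(y)$ is a $C^{k+1}$ diffeomorphism of $\{(s,y):0\le s\le S(y),\ y\in\Gamma^0\}$ onto $\mathcal{M}$, and every point of $\mathcal{M}$ lies on exactly one integral curve emanating from $\Gamma^0$. Applying the identity $\tfrac{\dd}{\dd s}\Phi_s^*\omega=\Phi_s^*(\lie_X\omega)$, equation \eqref{1} becomes, along each such trajectory, the linear ODE
\[
\frac{\dd}{\dd s}(\Phi_s^*\omega)+(\Phi_s^*f)(\Phi_s^*\omega)=\Phi_s^*\theta
\]
valued in the fixed finite-dimensional vector space $\Lambda^r T^*_y\mathcal{M}$. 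Combined with the initial condition \eqref{2}, the integrating factor method yields the explicit formula
\[
(\Phi_s^*\omega)(y)=e^{-\int_0^s\Phi_\tau^*f\,\dd\tau}\,\omega^0(y)+\int_0^s e^{-\int_\sigma^s\Phi_\tau^*f\,\dd\tau}\,(\Phi_\sigma^*\theta)(y)\,\dd\sigma,
\]
from which $\omega$ is recovered at the point $\Phi_s(y)\in\mathcal{M}$ by applying $(\Phi_{-s})^*$. Existence and uniqueness follow simultaneously, since any two solutions must coincide along each integral curve by ODE uniqueness.

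For the estimate \eqref{estimate}, the $C^{k+1}$ norm of $\Phi_s$ is controlled solely in terms of $\norm{X}_{C^{k+1}(\mathcal{M})}$ and $T$; the pullbacks $\Phi_s^*\theta$ and $\Phi_s^*f$ retain $C^k$ regularity with norms bounded via Lemma \ref{lemaa}; and the exponential together with the Riemann integration in $s$ preserve $C^k$ regularity. A final application of Lemma \ref{lemaa} to $(\Phi_{-s})^*$ produces \eqref{estimate} with constant depending only on $\norm{f}_{C^k(\mathcal{M})}$ and $\norm{X}_{C^{k+1}(\mathcal{M})}$. The main technical nuisance will be making the global flow-box diffeomorphism $\Psi$ rigorous: $\Phi_s$ is initially defined only locally near $\Gamma^0$, so one first extends $X$ smoothly to an open manifold slightly larger than $\mathcal{M}$ while preserving regularity, then uses transversality together with compactness to guarantee that every trajectory from $\Gamma^0$ sweeps through $\mathcal{M}$ exactly once before exiting through $\Gamma^T$; after that, the remainder reduces to routine ODE estimates packaged through the composition lemma.
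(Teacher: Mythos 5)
Your proof is correct and rests on the same underlying mechanism as the paper's --- reducing \eqref{1} to linear ODEs along integral curves of $X$ --- but the implementation is genuinely different and arguably cleaner. The paper proceeds in two stages: it first handles the special case $X = X^0\p_0$ by writing out $\lie_X\omega$ in coordinates and splitting into a triangular system of transport equations (equations \eqref{7}--\eqref{8}, solving the components with a $0$-index first and feeding them into the others), and then it reduces the general case to this one by constructing a global straightening homeomorphism $\Phi$ from the flow of the \emph{normalized} field $\tilde X = \p_0 + (X^\alpha/X^0)\p_\alpha$ (Proposition~\ref{lema7}) together with the naturality of $\lie$ under diffeomorphisms. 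You instead work directly with the flow $\Phi_s$ of $X$ itself, appeal to the coordinate-free identity $\tfrac{\dd}{\dd s}\Phi_s^*\omega = \Phi_s^*(\lie_X\omega)$, and write the integrating-factor solution in the fixed fibre $\Lambda^r T^*_y\mathcal{M}$, avoiding both the normalization of $X$ and the component-wise coordinate computation entirely. What your route buys is conceptual economy and invariance; what the paper's route buys is that, once $X$ is straightened and the travel time is made constant, the estimate is read off from ordinary scalar ODEs on a genuine product $[0,T]\times M$ rather than on the graph region $\{0\le s\le S(y)\}$, and the loss-free regularity bookkeeping is slightly more transparent. Your closing paragraph correctly identifies the one place where your version carries an extra technical burden --- rigorously constructing the global flow-box diffeomorphism on the variable-time region, extending $X$ past $\p\mathcal{M}$, and invoking Lemma~\ref{lemaa} for the pullbacks --- which is precisely the work that the paper's normalization to $\tilde X^0 = 1$ is designed to sidestep.
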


For the proof, we first get familiar what \eqref{1} stands for in a
local coordinate chart.

Let $E$ be a local coordinate chart of $M$. Then
$\tilde{E}=[0,T]\times E$ is a coordinate chart of $\mathcal{M}$. In
$\tilde{E}$, problem \eqref{1}--\eqref{2} is an initial value problem
of the transport equations. To see this, for simplicity, suppose that
$X=X^0\p_0+X^\alpha\p_\alpha$,  with $ X^\alpha=0$ for
$\alpha=1,\cdots,n$ in $\mathcal{M}$. Since $x^0$ is a global
coordinate, $X^0=dx^0(X)$ is a $C^{k}$--function defined in
$\mathcal{M}$. By our assumption, $X^0$ is positive and bounded away
from zero since $\mathcal{M}$ is compact.

Suppose that
\begin{eqnarray}
&&\omega=\omega_{i_1\cdots i_r}dx^{i_1}\wedge\cdots\wedge
dx^{i_r}=r!\sum_{0\le i_1<\cdots< i_r\le n}\omega_{i_1\cdots
i_r}dx^{i_1}\wedge\cdots\wedge dx^{i_r},\\
&&\theta=\theta_{i_1\cdots i_r}dx^{i_1}\wedge\cdots\wedge
dx^{i_r}=r!\sum_{0\le i_1<\cdots< i_r\le n}\theta_{i_1\cdots
i_r}dx^{i_1}\wedge\cdots\wedge dx^{i_r}.
\end{eqnarray}
Since, for the differential forms $\alpha$ and $\beta$, there hold
$$
\lie_X(\alpha\wedge\beta)=(\lie_X\alpha)\wedge\beta+\alpha
\wedge(\lie_X\beta),
$$
$d\lie_X\alpha=\lie_X(d\alpha)$, and
$\lie_Xf=Xf$ (cf. \cite{Fr}),
we obtain
\begin{eqnarray}
\lie_X\omega&=&r!\sum_{0<i_2<\cdots<i_r\le
n}\Big(X^0\p_0\omega_{0i_2\cdots i_r}+ \omega_{0 i_2\cdots
i_r}\p_0X^0\Big)dx^0\wedge
dx^{i_2}\wedge\cdots\wedge dx^{i_r}\nonumber\\
&&+r!\sum_{0<i_1<\cdots<i_r\le n}X^0\p_0\omega_{i_1\cdots
i_r}dx^{i_1}\wedge\cdots\wedge dx^{i_r}\nonumber\\
&&+r!\sum_{0<i_2<\cdots<i_r\le n}\omega_{0i_2\cdots i_r}\p_\alpha
X^0dx^\alpha\wedge dx^{i_2}\wedge\cdots\wedge dx^{i_r}\nonumber\\
&=&r!\sum_{0<i_2<\cdots<i_r\le n}\Big(X^0\p_0\omega_{0i_2\cdots
i_r}+ \omega_{0 i_2\cdots i_r}\p_0X^0\Big)dx^0\wedge
dx^{i_2}\wedge\cdots\wedge dx^{i_r}\nonumber\\
&&+r!\sum_{0<i_1<\cdots<i_r\le n}\Big(X^0\p_0\omega_{i_1\cdots
i_r}+\frac{1}{(r-1)!}\nonumber\\
&&\sum_{\sigma\in \mathcal{P}(r)}\big(\sign(\sigma)\
\omega_{0i_{\sigma(2)}\cdots i_{\sigma(r)}}\p_{i_{\sigma(1)}}
X^0\big)\Big) dx^{i_1}\wedge dx^{i_2}\wedge\cdots\wedge dx^{i_r}.
\end{eqnarray}
Here $\mathcal{P}(r)$ is the permutation group of $\{1,\cdots, r\}$,
and $\sign(\sigma)$ is the sign of a permutation $\sigma$.

Hence, by dividing $X^0$ from both sides of equation \eqref{1}, we
have
\begin{equation}
\p_0\omega_{0i_2\cdots
i_r}+\big(\frac{1}{X^0}\p_0X^0+\frac{f}{X^0}\big)\omega_{0i_2\cdots
i_r}=\frac{1}{X^0}\theta_{0i_2\cdots i_r},\quad
 0<i_2<\cdots<i_r\le
n;\label{7}
\end{equation}
and
\begin{eqnarray}
&&\p_0\omega_{i_1\cdots i_r}+\frac{f}{X^0}\omega_{i_1\cdots
i_r}\nonumber \\
&& =\frac{1}{X^0}\theta_{i_1\cdots
i_r}-\frac{1}{(r-1)!X^0}\sum_{\sigma\in\mathcal{P}(r)}\big(\sign(\sigma)
\p_{i_{\sigma(1)}}X^0\omega_{0i_{\sigma(2)}
\cdots i_{\sigma(r)}}\big),\quad  0<i_1<\cdots<i_r\le n.\qquad\quad\label{8}
\end{eqnarray}
These are linear transport equations  in the $x^0$-direction.

We can first solve \eqref{7} in $\tilde{E}$ by using the initial data
$\omega_{0i_2\cdots i_r}|_{\Gamma^0}=(\omega^0)_{0i_2\cdots i_r}$
and then substitute $\omega_{0i_2\cdots i_r}$ in the right-hand side
of \eqref{8} to solve $\omega_{i_1\cdots i_r}$ in $\tilde{E}$ with
initial data $\omega_{i_1i_2\cdots
i_r}|_{\Gamma^0}=(\omega^0)_{i_1i_2\cdots i_r}$. By antisymmetry of
the lower indices, we obtain all the coefficients $\omega_{i_1\cdots
i_r}$. Since the quantities in \eqref{1}--\eqref{2} are defined
globally, $\omega=\omega_{i_1\cdots i_r}dx^{i_1}\wedge\cdots
dx^{i_r}$ we solved is also well defined in $\mathcal{M}$.
Estimate \eqref{estimate} is  obvious from these initial value
problems of ordinary differential equations.

\smallskip

For the general case that $X\ne X^0\p_0$,  equation \eqref{1} would
be a first-order hyperbolic system with $\binom{n+1}{r}$ unknowns.
In addition, we can not use the rather simple coordinate charts like
$\tilde{E}$, since the
characteristic curves (i.e., integral curves of $X$) may escape
$\tilde{E}$ at  some  $t<T$  by solving the initial value problem.

We note that the Lie derivative behaves well under the homeomorphisms of
differentiable manifolds.

\begin{proposition}
Let $\Phi$ be a homeomorphism of $\mathcal{M}$. Then
\begin{eqnarray}
(\Phi^{-1})^*\lie_X\omega=\lie_{\Phi_*X}((\Phi^{-1})^*\omega).
\end{eqnarray}
\end{proposition}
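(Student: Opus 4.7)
The plan is to prove the identity by unwinding the flow-based definition of the Lie derivative and using the naturality of flows under smooth coordinate changes. I note first that for the statement to make sense $\Phi$ should actually be a $C^1$-diffeomorphism (so that $\Phi_*X$ and $(\Phi^{-1})^*\omega$ are both defined in the classical sense), and I proceed under this standing interpretation. The whole argument is formal: there is no analytic content beyond the chain rule, and the identity is a pure naturality statement that holds for tensor fields of arbitrary type, in particular for the $r$-forms needed in \S 3.5.

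First, I would recall the Cartan definition: if $\phi_t$ denotes the local flow of $X$, then for any smooth tensor field $\omega$,
\begin{equation*}
\lie_X\omega=\left.\frac{d}{dt}\right|_{t=0}\phi_t^*\omega.
\end{equation*}
Second, I would establish the key lemma that the flow $\psi_t$ of the push-forward vector field $\Phi_*X$ is the conjugate
\begin{equation*}
\psi_t=\Phi\circ\phi_t\circ\Phi^{-1}.
\end{equation*}
This follows because $\psi_0=\mathrm{id}$ and, by the chain rule,
\begin{equation*}
\left.\frac{d}{dt}\right|_{t=0}\psi_t(q)=d\Phi\bigl(X(\Phi^{-1}(q))\bigr)=(\Phi_*X)(q),
\end{equation*}
and a flow is determined uniquely by these two properties.

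Third, I would apply the flow definition to the right-hand side, use the relation $\Phi^{-1}\circ\psi_t=\phi_t\circ\Phi^{-1}$ coming from the previous step, exploit the contravariance $(F\circ G)^*=G^*\circ F^*$, and finally pull the constant map $\Phi^{-1}$ outside the $t$-derivative:
\begin{align*}
\lie_{\Phi_*X}\bigl((\Phi^{-1})^*\omega\bigr)
&=\left.\frac{d}{dt}\right|_{t=0}\psi_t^*\bigl((\Phi^{-1})^*\omega\bigr)
=\left.\frac{d}{dt}\right|_{t=0}(\Phi^{-1}\circ\psi_t)^*\omega\\
&=\left.\frac{d}{dt}\right|_{t=0}(\phi_t\circ\Phi^{-1})^*\omega
=(\Phi^{-1})^*\left.\frac{d}{dt}\right|_{t=0}\phi_t^*\omega
=(\Phi^{-1})^*\lie_X\omega.
\end{align*}
The only real obstacle is careful bookkeeping of the side on which each pullback acts; once the flow conjugation $\psi_t=\Phi\circ\phi_t\circ\Phi^{-1}$ is in hand, the identity drops out immediately. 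An alternative route would be to induct on tensor rank via the Leibniz rule for $\lie$, using the pointwise identities $\Phi^*(Xf)=(\Phi^*X)(\Phi^*f)$ and $\Phi^*\!\circ d=d\circ\Phi^*$, but the flow proof is both shorter and coordinate-free.
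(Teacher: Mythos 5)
Your proof is correct, but it takes a genuinely different route from the paper's. The paper derives the identity algebraically from Cartan's magic formula $\lie_X\omega = d\,i_X\omega + i_X\,d\omega$ together with the naturality formulas $\Phi^*d = d\,\Phi^*$ and $(\Phi^{-1})^*(i_X\omega) = i_{\Phi_*X}(\Phi^{-1})^*\omega$; once those two commutation rules are accepted, the conclusion is immediate. You instead use the flow definition $\lie_X\omega = \left.\frac{d}{dt}\right|_{t=0}\phi_t^*\omega$ and the conjugation relation $\psi_t = \Phi\circ\phi_t\circ\Phi^{-1}$ for the flow of $\Phi_*X$, then pull the fixed pullback $(\Phi^{-1})^*$ outside the $t$-derivative. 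Both arguments are sound and short. The trade-off: the paper's Cartan-formula route is tightly scoped to differential forms (which is all that is needed in \S 3.5, where $\omega = \widehat{d\bar u}$ is a $2$-form), and leans on the two naturality identities as known facts; your flow route is independent of form structure and proves the statement at once for tensor fields of arbitrary type, at the cost of invoking existence/uniqueness of flows and the fact that push-forward conjugates flows. Your remark that $\Phi$ must really be taken to be a diffeomorphism (so that $\Phi_*X$ and $(\Phi^{-1})^*\omega$ are meaningful) is a valid clarification; the paper writes ``homeomorphism'' but clearly intends a $C^{k+1}$-diffeomorphism, as is explicit in Proposition A.2 which supplies the $\Phi$ actually used. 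One tiny point worth tightening in your argument: verifying $\left.\frac{d}{dt}\right|_{t=0}\psi_t = \Phi_*X$ at $t=0$ alone does not by itself show $\psi_t$ is the flow of $\Phi_*X$; you need $\frac{d}{dt}\psi_t(q) = (\Phi_*X)(\psi_t(q))$ for all $t$, which follows by the same chain-rule computation carried out at general $t$ (or by the group property $\psi_{t+s} = \psi_t\circ\psi_s$), and this is standard enough that the omission is cosmetic rather than a gap.
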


This can be shown by using the Cartan formula $\lie_X\omega=di_X\omega+i_Xd\omega$  and
the formulae  $$\Phi^*d\omega=d\Phi^*\omega,\
(\Phi^{-1})^*(i_X\omega)=i_{\Phi_*X}(\Phi^{-1})^*\omega.$$ (cf.
\cite{Fr}), where $i_X\omega$ is the interior product of $\omega$ and
$X$.

In addition, by Lemma \ref{lemaa}, the $C^{k}$--norm of a
differential form in $\mathcal{M}$ is equivalent under $C^{k}$--homeomorphism
of the manifold. Thus, to prove Lemma \ref{lema5b} for the general case, it suffices
to straighten the vector field $X$ to the form
$X^0\p_0$ globally by a suitable homeomorphism of $\mathcal{M}$.

\begin{proposition}\label{lema7}
For given $C^{k+1}$--vector field $X$ in $\mathcal{M}$ which is
transverse to $\{x^0\}\times M$ for every $x^0\in[0,T]$,  there is a
$C^{k+1}$--homeomorphism $\Phi$: $\mathcal{M}\rightarrow\mathcal{M}$
such that, for any fixed $x^0\in[0,T]$, it is also a homeomorphism
of $\{x^0\}\times M$, and $\Phi_*X=(\Phi^{-1})^*(dx^0(X))\p_0.$
\end{proposition}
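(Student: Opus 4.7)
\smallskip
\noindent
\textbf{Proof proposal.}
The plan is to construct $\Phi$ as the inverse of the map that sends a ``straight vertical line'' $\{(s,p):s\in[0,T]\}$ to the integral curve of $X$ issued from $(0,p)\in\Gamma^0$, reparametrized so that its $x^0$-coordinate coincides with the first variable. Concretely, let $\eta_p\colon[0,s_p)\to\mathcal{M}$ denote the maximal integral curve of $X$ with $\eta_p(0)=(0,p)$. Because $X^0:=dx^0(X)$ is continuous and positive on the compact manifold $\mathcal{M}$, there exist constants $0<c\le C$ with $c\le X^0\le C$; hence $x^0(\eta_p(s))$ is strictly increasing with derivative in $[c,C]$, so $\eta_p$ cannot exit through the lateral boundary and must reach $\Gamma^T$ in finite time. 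In particular, for every $t\in[0,T]$ there is a unique $\tau_p(t)\in[t/C,t/c]$ with $x^0(\eta_p(\tau_p(t)))=t$.

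With this in hand, I would define
\[
\Psi\colon\mathcal{M}\to\mathcal{M},\qquad \Psi(t,p):=\eta_p(\tau_p(t)),
\]
and set $\Phi:=\Psi^{-1}$. The regularity of $\Psi$ follows from two standard facts: first, since $X\in C^{k+1}$, the flow $(s,p)\mapsto\eta_p(s)$ is $C^{k+1}$ by the classical theorem on smooth dependence on initial data; second, because $\partial_s x^0(\eta_p(s))=X^0(\eta_p(s))\ge c>0$, the implicit function theorem applied to $x^0(\eta_p(\tau))-t=0$ yields $\tau_p(t)\in C^{k+1}([0,T]\times M)$, so $\Psi\in C^{k+1}(\mathcal{M};\mathcal{M})$. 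By construction $\Psi(\Gamma^t)\subset\Gamma^t$ for every $t$, and the analogous construction applied to the reverse flow of $X$ (well defined by the same transversality/compactness argument) produces a $C^{k+1}$ inverse, so $\Phi=\Psi^{-1}$ is a $C^{k+1}$ homeomorphism that preserves each slice $\{t\}\times M$.

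It remains to verify the pushforward identity. Differentiating $x^0(\eta_p(\tau_p(t)))=t$ gives $\tau_p'(t)=1/X^0(\Psi(t,p))$, hence
\[
\Psi_*\partial_0\big|_{(t,p)}
 =\tau_p'(t)\,\dot\eta_p(\tau_p(t))
 =\frac{1}{X^0(\Psi(t,p))}\,X\big|_{\Psi(t,p)}.
\]
Applying $\Phi_*$ to both sides and rearranging yields $\Phi_*X\big|_{q}=X^0(q)\,\partial_0\big|_{\Phi(q)}$ for every $q\in\mathcal{M}$; since $X^0(q)=dx^0(X)\circ\Phi^{-1}(\Phi(q))=\bigl((\Phi^{-1})^*(dx^0(X))\bigr)(\Phi(q))$, this is exactly $\Phi_*X=(\Phi^{-1})^*(dx^0(X))\,\partial_0$, as asserted.

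The main technical point is really the passage from the local straightening theorem for vector fields (which is completely standard but only valid in a neighborhood of each point) to a \emph{global} straightening on $\mathcal{M}$; this is the step that uses crucially both the transversality of $X$ to every slice and the compactness of $M$, through the bounds $0<c\le X^0\le C$. Everything else — existence, regularity, and inversion of $\Psi$ — reduces to standard ODE theory and the implicit function theorem, and the final identity is a short direct computation once $\Psi$ has been constructed.
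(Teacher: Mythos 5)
Your proof is correct and follows essentially the same route as the paper's: both construct $\Phi$ as the inverse of the map sending a vertical segment $\{(t,p):t\in[0,T]\}$ to the integral curve of $X$ through $(0,p)$, reparametrized so that the parameter equals the $x^0$-coordinate. The only cosmetic difference is that the paper first rescales $X$ to $\tilde{X}=X/X^0$ so its flow has unit $x^0$-speed (making slice-preservation automatic), whereas you work with the flow of $X$ directly and extract the reparametrization via the implicit function theorem; these are the same operation expressed in two standard ways.
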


\begin{proof}
{\it Step 1.}\  Let $\tilde{E}$ be a coordinate chart of
$\mathcal{M}$ as introduced above and  $X=X^i\p_i$ with
$X^0>0$. Since $X^0$ is a nonzero function in $\mathcal{M}$, we may
define a $C^{k+1}$--vector field $\tilde{X}$ in $\mathcal{M}$ by
\begin{eqnarray}
\tilde{X}=\tilde{X}^i\p_i=\p_0+\frac{X^\alpha}{X^0}\p_\alpha.
\end{eqnarray}

{\it Step 2.}\ Now $\tilde{X}$ generates a flow $\phi_t$ in
$\mathcal{M}$ by theory of ordinary differential equations since
$\mathcal{M}$ is compact: For any $P\in M$, $\gamma(t)=\phi_t(P),\,
t\in[0,T]$, is a curve in $\mathcal{M}$ with initial value
$\gamma(0)=(0,P)\in \Gamma^0$. We then define $\Phi:
\mathcal{M}\rightarrow\mathcal{M}$ by
\begin{eqnarray}
\Phi(\phi_t(P))=(t, P), \qquad P\in M.
\end{eqnarray}
Note that $\phi_t(P)\in \Gamma^t$ since $\tilde{X}^0=1.$ We now show
that $\Phi$ is a homeomorphism.

\medskip
{\it Step 3.}\ For any $Q\in\mathcal{M}$, it is easy to see that
there is uniquely a pair $(t,P)$ with  $P\in M$ and $t\in[0,T]$ such
that $\phi_t(P)=Q$ by solving backward the integral curve of
$\tilde{X}$ through $Q$. So $\Phi$ is defined for all the points in
$\mathcal{M}$. In addition, $\Phi$ is obviously surjective and
injective by the uniqueness and existence results of the initial value
problem of ordinary differential equations. By continuous dependence
on the initial data $(0,P)$ and $t$, we see that $\Phi$ and $\Phi^{-1}$ are
also continuous. If $\tilde{X}\in C^{k+1}$, then $\Phi$ and
$\Phi^{-1}$ are $C^{k+1}$--mappings by $C^{k+1}$--dependence of
solutions of ordinary differential equations on $t$ and initial
data. This proves that  $\Phi$ is a $C^{k+1}$--homeomorphism.

\medskip
{\it Step 4.}\ Now,  by definition of push-forward mapping (tangent
mapping) of vectors, we have
\begin{eqnarray}
\Phi_*(X)&=&\Phi_*(X^0\tilde{X})=(\Phi^{-1})^*(X^0)\Phi_*(\tilde{X})\nonumber\\
&=&(\Phi^{-1})^*(X^0)\frac{d}{dt}\Phi(\gamma(t))\nonumber\\
&=&(\Phi^{-1})^*(X^0)\p_0.
\end{eqnarray}
This completes the proof.
\end{proof}

\end{appendix}

\bigskip
{\bf Acknowledgments.}
The research of
Gui-Qiang G. Chen was supported in part by the National Science
Foundation under Grants
DMS-0935967 and DMS-0807551, the UK EPSRC Science and Innovation
Award to the Oxford Centre for Nonlinear PDE (EP/E035027/1),
the NSFC under a joint project Grant 10728101, and
the Royal Society--Wolfson Research Merit Award (UK).
Hairong Yuan's research is
supported in part by Shanghai Chenguang Program (09CG20), National
Natural Science Foundation of China under Grants NSFC-10901052,
NSFC-10871071, Shanghai Leading Academic Discipline Project (No.
B407), and a Fundamental Research Funds for the Central
Universities.

%%%----------------------------------------------------------------------------

\end{document}